\newcommand{\dsp}{\displaystyle}
\newcommand{\eps}{\varepsilon}
\newcommand{\Om}{\Omega}
\newcommand{\mrm}[1]{\mathrm{#1}}
\newcommand{\bfx}{\boldsymbol{x}}
\newcommand{\bfnu}{\boldsymbol{\nu}}
\newcommand{\Cplx}{\mathbb{C}}
\newcommand{\R}{\mathbb{R}}
\renewcommand{\div}{\mrm{div}}
\newcommand{\mL}{\mrm{L}}
\newcommand{\mH}{\mrm{H}}
\newcommand{\mA}{\mathscr{A}}
\newcommand{\mB}{\mathscr{B}}
\newcommand{\loc}{\mbox{\scriptsize loc}}
\renewcommand{\ker}{\mrm{ker}}
\newtheorem{lemma}{Lemma}[section]
\newtheorem{remark}{Remark}[section]
\newtheorem{proposition}{Proposition}[section]
\newtheorem{Assumption}{Assumption}[section]
\begin{document}

~\vspace{0.0cm}
\begin{center}
{\sc \bf\LARGE 
Non-scattering wavenumbers and far field invisibility \\[6pt] for a finite set of incident/scattering directions}
\end{center}

\begin{center}
\textsc{Anne-Sophie Bonnet-Ben Dhia}$^1$, \textsc{Lucas Chesnel}$^2$, \textsc{Sergei A. Nazarov}$^{3,\,4,\,5}$\\[16pt]
\begin{minipage}{0.9\textwidth}
{\small
$^1$ Laboratoire Poems, UMR 7231 CNRS/ENSTA/INRIA, Ensta ParisTech, 828, Boulevard des Maréchaux, 91762 Palaiseau, France; \\
$^2$ Centre de mathématiques appliquées, \'{E}cole Polytechnique, 91128 Palaiseau, France; \\
$^3$ Faculty of Mathematics and Mechanics, St. Petersburg State University, Universitetsky prospekt, 28, 198504, Peterhof, St. Petersburg, Russia;\\
$^4$ Laboratory for mechanics of new nanomaterials, St. Petersburg State Polytechnical University, Polytekhnicheskaya ul, 29, 195251, St. Petersburg, Russia;\\
$^5$ Laboratory of mathematical methods in mechanics of materials, Institute of Problems of Mechanical Engineering, Bolshoj prospekt, 61, 199178, V.O., St. Petersburg, Russia.\\[10pt] 
E-mail: \texttt{Anne-Sophie.Bonnet-Bendhia@ensta-paristech.fr}, \texttt{lucas.chesnel@cmap.polytechnique.fr}, \texttt{srgnazarov@yahoo.co.uk}\\[-14pt]
\begin{center}
(\today)
\end{center}
}
\end{minipage}
\end{center}
\vspace{0.4cm}

\noindent\textbf{Abstract.} 
We investigate a time harmonic acoustic scattering problem by a penetrable inclusion with compact support embedded in the free space. We consider cases where an observer can produce incident plane waves and measure the far field pattern of the resulting scattered field only in a finite set of directions. In this context, we say that a wavenumber is a non-scattering wavenumber if the associated relative scattering matrix has a non trivial kernel. Under certain assumptions on the physical coefficients of the inclusion, we show that the non-scattering wavenumbers form a (possibly empty) discrete set. Then, in a second step, for a given real wavenumber and a given domain $\mathcal{D}$, we present a constructive technique to prove that there exist inclusions supported in $\overline{\mathcal{D}}$ for which the corresponding relative scattering matrix is null. These inclusions have the important property to be impossible to detect from far field measurements. The approach leads to a numerical algorithm which is described at the end of the paper and which allows to provide examples of (approximated) invisible inclusions.\\

\noindent\textbf{Key words.} Non-scattering wavenumbers, interior transmission problem, invisibility, energy identities, asymptotic analysis, relative scattering matrix.

\section{Introduction}

Consider a reference acoustic medium, say $\R^d$, $d=2,3$, presenting a defect (penetrable inclusion) localized in the bounded domain $\mathcal{D}$. Generating incident plane waves and measuring the resulting far field pattern of the scattered fields (the only available information far from $\mathcal{D}$), one can try to reconstruct the features of the defect in the reference medium. In particular, the classical inverse scattering problem is to determine the support of the inclusion. In view of this aim, many methods have been developed \cite{Pott06,CaCo06}, as for example, the \textit{Linear Sampling Method} (LSM) \cite{CoKi96,CoPP97}. It seems that the LSM works, without modifications and \textit{a priori} knowledge, only at wavenumbers which are not \textit{transmission eigenvalues}. Transmission eigenvalues correspond to wavenumbers $k>0$ such that there are incident fields, generalized\footnote{Here, the adjective ``generalized'' means that the incident field can be the combination of an infinite number of plane waves. In the literature, such a field is referred to as a Herglotz wave function.} combination of plane waves, which produce arbitrarily small scattered fields outside the inclusion. This, and the fact that nice questions of spectral theory appear in its study, explains why the \textit{interior transmission eigenvalue problems } has been so intensely investigated for now more than 25 years (see \cite{Kirs86,CoMo88,RySl91} and the recent review paper \cite{CaHa13}).\\
\newline
All the aforementioned theory supposed that we can produce incident plane waves in all directions $\boldsymbol{\theta}_{\mrm{i}}\in \mathbb{S}^{d-1}$ and measure the far field pattern of the associated scattered fields for all $\boldsymbol{\theta}_{\mrm{s}}\in \mathbb{S}^{d-1}$, where $\mathbb{S}^{d-1}$ denotes the unit sphere in $\R^d$. However, this is not quite realistic. Some works have been devoted to the study of the LSM in partial aperture (see \cite{Cako07} and \cite{CoMH03,CoHP03,CoMo06} for the corresponding numerical experiments), that is when the observer knows only the restriction of the far field operator on a non empty open set of $\mathbb{S}^{d-1}$. Thanks to the analytic dependence with respect to the wavenumber, the theory can be developed as in the case of full aperture. But again, this is not completely satisfactory for applications because in practice, one cannot access to this continuum of information. Very often, for example in the numerical implementation of the LSM, one can produce incident plane waves and measure the far field pattern of the associated scattered fields only in a finite set of directions. To study such a configuration, let us consider given $N$ distinct incident directions $\boldsymbol{\theta}_1,\dots, \boldsymbol{\theta}_N\in\mathbb{S}^{d-1}$. We shall assume that the emitters and the receivers are located at the same positions so that measurements can be made only in the directions $-\boldsymbol{\theta}_1,\dots, -\boldsymbol{\theta}_N$ (backscattering directions). Then, the question of the injectivity of the far field operator amounts to the question of the injectivity of a relative scattering matrix of size $N\times N$ denoted $\mA(k)$. In this context, we shall say that $k>0$ is a non-scattering wavenumber if $\mA(k)$ is not injective, equivalently, if there is a non trivial incident field, combination of the plane waves of directions of propagation $\boldsymbol{\theta}_1,\dots,\boldsymbol{\theta}_N$, such that the far field pattern of the associated scattered field vanishes in the directions $-\boldsymbol{\theta}_1,\dots,-\boldsymbol{\theta}_N$. Let us emphasize that in this case, unlike in the continuous setting, the scattered field has no reason to be identically null outside the defect. The first objective of the present article is to find criteria on the physical parameters of the inclusion to guarantee that the non-scattering wavenumbers, defined by means of the relative scattering matrix, form a (possibly empty) discrete set. This kind of results seems an important first step to justify the practical implementation of reconstruction methods such as the LSM mentioned above.\\
\newline
On the other hand, all techniques are not equally sensitive to transmission eigenvalues. Thus, it has been shown in \cite{Lech09} that the \textit{Factorization Method} (FM) \cite{KiGr08} is stable at transmission eigenvalues. Again, this property has been demonstrated in the continuous framework. Can we hope to justify the FM in the setting with a finite set of emitters and receivers for all wavenumbers $k>0$? This is the second question we investigate in this work. More precisely, under certain assumptions on the directions $\boldsymbol{\theta}_1,\dots,\boldsymbol{\theta}_N$, we show that the answer is negative proving constructively that for any $k>0$ and any domain $\mathcal{D}$, there are penetrable inclusions supported in $\overline{\mathcal{D}}$ for which the matrix $\mA(k)$ is null (that is equivalent to say that $\mA(k)$ has a kernel of dimension $N$). Let us underline that we consider only non dissipative isotropic inclusions, the result being simpler to establish for dissipative or anisotropic inclusions. The above proposition indicates in particular that the data of the relative scattering matrix does not uniquely determine the position of the defect. Bearing in mind that $\mA(k)$ belongs to a space of finite dimension while there is an infinite number of degrees of freedom for the definition of the defect, this result is not completely surprising. However, we do not know any existing proof in the literature. Note that in the continuous setting, this construction is impossible. Indeed, the knowledge of the far field operator on $\mathbb{S}^{d-1}\times\mathbb{S}^{d-1}$ uniquely determines the parameter of the inclusion (see \cite[Theorem 10.5]{CoKr13} and the references therein: \cite{Nach88,SyUh87} in 3D, \cite{Bukh08,ImYa12} in 2D).\\
\newline
This text is organized as follows. In Section \ref{sectionSetting}, we formulate the problem and introduce the notations. Section \ref{SectionFreeSpace} is dedicated to the proof of discreteness of non-scattering wavenumbers in the setting with a finite set of emitters and receivers. We first give a sense to the relative scattering matrix for complex wavenumbers. Then, we establish some energy identities for purely imaginary wavenumbers $k$. Using the analytic dependence on $k$, these equalities allow us to prove that the real wavenumbers $k$ for which $\mA(k)$ is not injective form a (possibly empty) discrete set. In the process, we also consider sound hard obstacles for which the analysis is slightly simpler. In Section \ref{FFInvisibility}, we adopt a different point of view. For a given wavenumber $k>0$ and a given domain $\mathcal{D}$, we present a constructive technique to demonstrate that there are non trivial inclusions supported in $\overline{\mathcal{D}}$ for which $\mA(k)$ is the null matrix. To implement the approach, which was developed in \cite{Naza11,BoNa13} in a context related to waveguides problems, we need in particular to assume that the scattering directions are different from the incident directions. In the second part of Section \ref{FFInvisibility}, we show that it is much more complicated (it might be impossible but we are not able to prove it) to impose far field invisibility in the incident direction. Finally in the last part of Section \ref{FFInvisibility}, we derive a numerical algorithm to provide examples of (approximated) invisible inclusions.

\section{Setting}\label{sectionSetting}
We assume that the propagation of acoustic waves in time harmonic regime in the reference medium $\R^d$, $d=2,3$, is governed by the Helmholtz equation $\Delta u+k^2 u=0$. Here, $u$ denotes the pressure, $k>0$ corresponds to the wavenumber proportional to the frequency of harmonic oscillations and $\Delta$ is the Laplace operator. The localized perturbation in the reference medium is modelled by some bounded open set $\mathcal{D}\subset\R^d$ with Lipschitz boundary $\Gamma:=\partial\mathcal{D}$. To represent the physical properties of the penetrable inclusion $\mathcal{D}$, we introduce $A\in\mrm{L}^{\infty}(\R^d,\R^{d\times d})$ and $\rho\in \mrm{L}^{\infty}(\R^d,\R)$, two parameters such that $A=\mrm{Id}$ and $\rho=1$ in $\R^d\setminus\overline{\mathcal{D}}$. We assume that $A(\bfx)$ is symmetric for all $\bfx\in\mathcal{D}$, satisfies $A(\bfx)\zeta\cdot\zeta\ge C |\zeta|^2$ for all $\bfx\in\R^d$, $\zeta\in\R^d$ and that $\rho$ verifies $\rho(\bfx)\ge C$ for all $\bfx\in\R^d$, for some constant $C>0$. The scattering of the incident plane wave $u_{\mrm{i}}:=e^{i k \boldsymbol{\theta}_{\mrm{i}}\cdot\boldsymbol{x}}$, of direction of propagation $\boldsymbol{\theta}_{\mrm{i}}\in\mathbb{S}^{d-1}$, by $\mathcal{D}$, is described by the problem
\begin{equation}\label{PbChampTotalFreeSpaceIntrod}
\begin{array}{|rcll}
\multicolumn{4}{|l}{\mbox{Find }u\mbox{ such that}}\\[4pt]
-\div(A \nabla u) & = & k^2 \rho\,u & \mbox{ in }\R^d,\\
u & = & u_{\mrm{i}}+u_{\mrm{s}} & \mbox{ in }\R^d,\\[4pt]
\multicolumn{4}{|c}{\dsp\lim_{r\to +\infty} r^{\frac{d-1}{2}}\left( \frac{\partial u_{\mrm{s}}}{\partial r}-ik u_{\mrm{s}} \right)  = 0.}
\end{array}
\end{equation}
We emphasize that in (\ref{PbChampTotalFreeSpaceIntrod}), $u_{\mrm{i}}$ is given. On the other hand, the last line of (\ref{PbChampTotalFreeSpaceIntrod}), where $r=|\boldsymbol{x}|$, is the Sommerfeld radiation condition which ensures that the scattered field $u_{\mrm{s}}$ is outgoing. It is known (see for example \cite{CoKr13}) that for all $k>0$, Problem (\ref{PbChampTotalFreeSpaceIntrod}) has a unique solution in $\mH^1_{\loc}(\R^d)$, the Sobolev space of measurable functions whose $\mH^1$-norm is finite on each bounded subset of $\R^d$. We shall denote $u_{\mrm{s}}(\cdot,\boldsymbol{\theta}_{\mrm{i}})$ the associated scattered field. Using Green's representation formula and computing explicitly the asymptotic behaviour of the Green's function for the Helmholtz equation far from $\mathcal{D}$, one proves (see \cite[Theorem 2.6]{CoKr13}) the expansion 
\begin{equation}\label{scatteredFieldFreeSpaceIntro}
u_{\mrm{s}}(\boldsymbol{x},\boldsymbol{\theta}_{\mrm{i}})= \dsp e^{i k r}r^{-\frac{d-1}{2}}\,\Big(\,u_{\mrm{s}}^{\infty}(\boldsymbol{\theta}_{\mrm{s}},\boldsymbol{\theta}_{\mrm{i}})+O(1/r)\,\Big),
\end{equation}
as $r\to+\infty$, uniformly in $\boldsymbol{\theta}_{\mrm{s}}\in \mathbb{S}^{d-1}$. Here $\boldsymbol{\theta}_{\mrm{s}}$ is the direction of observation. The function $u^{\infty}_{\mrm{s}}(\cdot,\boldsymbol{\theta}_{\mrm{i}}): \mathbb{S}^{d-1}\to\Cplx$,  is called the \textit{far field pattern} associated with $u_{\mrm{i}}:=e^{i k \boldsymbol{\theta}_{\mrm{i}}\cdot\boldsymbol{x}}$.
In other words, at infinity, $u_{\mrm{s}}(\cdot,\boldsymbol{\theta}_{\mrm{i}})$ behaves at the first order like a cylindrical wave in 2D or like a spherical wave in 3D. The far field pattern is given by the following integral representation
\begin{equation}\label{FarFieldPatternGene}
u_{\mrm{s}}^{\infty}(\boldsymbol{\theta}_{\mrm{s}},\boldsymbol{\theta}_{\mrm{i}}):= c_d  \langle \partial_{\bfnu}u_{\mrm{s}}^+(\cdot,\boldsymbol{\theta}_{\mrm{i}}),e^{-ik \boldsymbol{\theta}_{\mrm{s}}\cdot\boldsymbol{x}} \rangle_{\Gamma}-c_d  \langle \partial_{\bfnu}( e^{-ik \boldsymbol{\theta}_{\mrm{s}}\cdot\boldsymbol{x}}),u_{\mrm{s}}(\cdot,\boldsymbol{\theta}_{\mrm{i}}) \rangle_{\Gamma}.
\end{equation}
In this expression, $\langle\cdot,\cdot\rangle_{\Gamma}$ stands for the duality pairing (without complex conjugation) between $\mH^{-1/2}(\Gamma)$ and $\mH^{1/2}(\Gamma)$. On the other hand, $\boldsymbol{\nu}$ denotes the unit normal vector to $\Gamma$ orientated to the interior of $\mathcal{D}$ and $\partial_{\bfnu}u_{\mrm{s}}^+(\cdot,\boldsymbol{\theta}_{\mrm{i}})$ refers to the normal trace of $u_{\mrm{s}}(\cdot,\boldsymbol{\theta}_{\mrm{i}})|_{\Om}$, with $\Om:=\R^d\setminus\overline{\mathcal{D}}$.
Finally, the constant $c_d$ verifies $c_2=e^{i\pi/4}/\sqrt{8\pi k}$ and $c_3=1/(4\pi)$. In particular when $A=\mrm{Id}$ in $\R^d$, we deduce from (\ref{FarFieldPatternGene}) that 
\begin{equation}\label{FFPtermeSource}
u_{\mrm{s}}^{\infty}(\boldsymbol{\theta}_{\mrm{s}},\boldsymbol{\theta}_{\mrm{i}}) = c_d\,k^2\dsp \int_{\mathcal{D}} (\rho-1)(u_{\mrm{s}}^{\infty}(\cdot,\boldsymbol{\theta}_{\mrm{i}})+e^{ik \boldsymbol{\theta}_{\mrm{i}}\cdot\bfx}) \,e^{-ik \boldsymbol{\theta}_{\mrm{s}}\cdot\bfx}\,d\bfx.
\end{equation}
As mentioned in the introduction, we shall assume in this article that we have a finite set of emitters and receivers located at the same positions so that we can produce incident plane waves in some given directions $\boldsymbol{\theta}_1,\dots, \boldsymbol{\theta}_N\in\mathbb{S}^{d-1}$ and measure the far field pattern of the resulting scattered field only in the  directions $-\boldsymbol{\theta}_1,\dots, -\boldsymbol{\theta}_N$. This corresponds to knowing all elements of the relative scattering matrix $\mA(k)\in\Cplx^{N\times N}$ such that
\begin{equation}\label{defRelScaMat}
\mA_{mn}(k) = u_{\mrm{s}}^{\infty}(-\boldsymbol{\theta}_m,\boldsymbol{\theta}_n).
\end{equation}
In the next section, we prove that for given physical parameters $A$ and  $\rho$, the values of $k>0$ such that $\mA(k)$ is not injective form a (possibly empty) discrete set. Then in Section \ref{FFInvisibility}, imposing $A=\mrm{Id}$ in $\R^d$, for a given $k>0$ and a given domain $\mathcal{D}$, we construct a $\rho$ supported in $\overline{\mathcal{D}}$ such that $\mA(k)$ is the null matrix. 

\section{Discreteness of non-scattering wavenumbers}\label{SectionFreeSpace}

We remind the reader that the wavenumber $k>0$ is called non-scattering wavenumber if $\mA(k)$ is not injective. Before working on the relative scattering matrix $\mA(k)$, we investigate first the case where $\mathcal{D}$ is a sound hard obstacle (rather than a penetrable inclusion). This study is convenient for pedagogical purposes because the analysis for the sound hard obstacle is (slightly) simpler than the one for the penetrable inclusion. On the other hand, it yields a result which is interesting by itself. Indeed, while the discreteness of transmission eigenvalues for the sound hard obstacle in the continuous setting is obtained for free (they correspond to the eigenvalues for the Neumann Laplacian in $\mathcal{D}$), the equivalent result is more delicate to show in the discrete framework (note that transmission eigenvalues are then called non-scattering wavenumbers).

\subsection{Sound hard obstacle} 
Denote $\Om=\R^d\setminus\overline{\mathcal{D}}$ and consider the following scattering problem 
\begin{equation}\label{PbChampTotalFreeSpace}
\begin{array}{|rcll}
\multicolumn{4}{|l}{\mbox{Find }u\mbox{ such that}}\\
-\Delta u  & = & k^2 u & \mbox{ in }\Om\\
u & = & u_{\mrm{i}}+u_{\mrm{s}} & \mbox{ in }\Om\\
\partial_{\bfnu}u  & = & 0  & \mbox{ on }\Gamma\\[4pt]
\multicolumn{4}{|c}{\dsp\lim_{r\to +\infty} r^{\frac{d-1}{2}}\left( \frac{\partial u_{\mrm{s}}}{\partial r}-ik u_{\mrm{s}} \right)  = 0,}
\end{array}
\end{equation}
with $u_{\mrm{i}}=e^{i k \boldsymbol{\theta}_{\mrm{i}}\cdot\boldsymbol{x}}$. In (\ref{PbChampTotalFreeSpace}), the third equation on the boundary $\Gamma$ models the sound hard obstacle. According e.g. to \cite[Theorem 9.11]{McLe00}, we know that Problem (\ref{PbChampTotalFreeSpace}) has a unique solution in $\mrm{H}^1_{\loc}(\R^d)$ for all $k>0$. We call $u(\cdot,\boldsymbol{\theta}_{\mrm{i}})$ and $u_{\mrm{s}}(\cdot,\boldsymbol{\theta}_{\mrm{i}})$ the corresponding total and scattered fields. Formula (\ref{FarFieldPatternGene}), which is also valid for this problem, and a simple integration by parts on $\mathcal{D}$ provide the equalities
\begin{equation}\label{equationsFFP}
\begin{array}{ccl}
u_{\mrm{s}}^{\infty}(\boldsymbol{\theta}_{\mrm{s}},\boldsymbol{\theta}_{\mrm{i}}) & = & c_d  \langle \partial_{\bfnu}u_{\mrm{s}}(\cdot,\boldsymbol{\theta}_{\mrm{i}}),e^{-ik \boldsymbol{\theta}_{\mrm{s}}\cdot\boldsymbol{x}} \rangle_{\Gamma}-c_d  \langle \partial_{\bfnu}( e^{-ik \boldsymbol{\theta}_{\mrm{s}}\cdot\boldsymbol{x}}),u_{\mrm{s}}(\cdot,\boldsymbol{\theta}_{\mrm{i}}) \rangle_{\Gamma}\\[5pt]
0 & = &  c_d  \langle \partial_{\bfnu}u_{\mrm{i}},e^{-ik \boldsymbol{\theta}_{\mrm{s}}\cdot\boldsymbol{x}} \rangle_{\Gamma}-c_d  \langle \partial_{\bfnu}( e^{-ik \boldsymbol{\theta}_{\mrm{s}}\cdot\boldsymbol{x}}),u_{\mrm{i}} \rangle_{\Gamma}.
\end{array}
\end{equation} 
Summing the two equations of (\ref{equationsFFP}) and remembering that $\partial_{\bfnu}(u_{\mrm{i}}+u_{\mrm{s}}(\cdot,\boldsymbol{\theta}_{\mrm{i}}))=\partial_{\bfnu}u(\cdot,\boldsymbol{\theta}_{\mrm{i}})=0$ on $\Gamma$, we deduce that 
\begin{equation}\label{FFPFreeSpace}
u_{\mrm{s}}^{\infty}(\boldsymbol{\theta}_{\mrm{s}},\boldsymbol{\theta}_{\mrm{i}})= -c_d  \langle \partial_{\bfnu}( e^{-ik \boldsymbol{\theta}_{\mrm{s}}\cdot\boldsymbol{x}}),u(\cdot,\boldsymbol{\theta}_{\mrm{i}})\rangle_{\Gamma}.
\end{equation}
We denote $\mB(k)\in\Cplx^{N\times N}$ the relative scattering matrix for this problem. It is defined elementwise via
\begin{equation}\label{defScaMatB}
\mB_{mn}(k)=u_{\mrm{s}}^{\infty}(-\boldsymbol{\theta}_m,\boldsymbol{\theta}_n)=-c_d  \langle \partial_{\bfnu}( e^{ik \boldsymbol{\theta}_m\cdot\boldsymbol{x}}),u(\cdot,\boldsymbol{\theta}_n)\rangle_{\Gamma}.
\end{equation}
As for the case of the penetrable inclusion, we shall say that $k\in\R^{\ast}$ is a non-scattering wavenumber when the kernel of $\mB(k)$ contains a non-zero element. In the sequel, we wish to prove that the set of non-scattering wavenumbers is either empty or discrete. \\
\newline
We start by giving a sense to $\mB(k)$ for non real $k$. For $k\in\Cplx\setminus\R$, $f\in\mL^2_{c}(\Om)$ (the set of functions of $\mL^2(\Om)$ which are compactly supported) and $g\in\mH^{-1/2}(\Gamma)$, the Lax-Milgram theorem ensures there is a unique solution $u\in\mH^1(\Om)$ to the following problem
\begin{equation}\label{PbGeneFreeSpace}
\begin{array}{|rcll}
\multicolumn{4}{|l}{\mbox{Find }u\mbox{ such that}}\\
-\Delta u  - k^2 u& = & f & \mbox{ in }\Om\\
\partial_{\bfnu}u  & = & g  & \mbox{ on }\Gamma.
\end{array}
\end{equation}
In particular, this allows us to define, for $k$ verifying $\Im m\,k>0$, the resolvent $\mathcal{R}(k):\mL^2_{c}(\Om)\times\mH^{-1/2}(\Gamma)\to \mH^1_{\loc}(\Om)$ such that $\mathcal{R}(k)(f,g)=u$. The map $\mathcal{R}$ can be meromorphically continued to $\Lambda$ where $\Lambda$ is equal to $\Cplx$ or to some Riemann surface according to the parity of the space dimension (the interested reader can find more details for example in \cite{Melr95}). Moreover, $\mathcal{R}$ has no pole on $(0;+\infty)$ and, according to the limiting absorption principle, for $k\in (0;+\infty)$, we have  $\mathcal{R}(k)(0,-\partial_{\bfnu}u_{\mrm{i}})=u_{\mrm{s}}$ where $u_{\mrm{s}}$ is defined in (\ref{PbChampTotalFreeSpace}). We deduce that the matrix valued map $k\mapsto \mB(k)$ introduced in (\ref{defScaMatB}) is analytic on $\Lambda \setminus \mathscr{P}$, $\mathscr{P}$ denoting the set of poles of $\mathcal{R}$. As a consequence, $k\mapsto \det\mB(k)$ is analytic on $\Lambda \setminus \mathscr{P}$ and, according to the principle of isolated zeros, to show that the set of non-scattering wavenumbers is either empty or discrete, it is sufficient to exhibit some $k\in\Lambda \setminus \mathscr{P}$ such that $\mB(k)$ is injective. We will prove that this is true when $k$ is purely imaginary with $\Im m\,k>0$ (note that $\{k\in\Cplx\,|\,\Im m\,k>0 \}$ is indeed included in $\Lambda \setminus \mathscr{P}$).\\
\newline 
Set $k=i\kappa$ with $\kappa>0$. In this case, if the source terms in (\ref{PbGeneFreeSpace}) are real valued, then the solution in $\mH^1(\Om)$ of Problem (\ref{PbGeneFreeSpace}) is also real-valued. Since the functions $\boldsymbol{x}\mapsto e^{ik\boldsymbol{\theta}_n\cdot\boldsymbol{x}}$ are real valued, it is sufficient to prove that $\mB(i\kappa)$ is invertible as a matrix of $\R^{N\times N}$. Consider the incident field
\begin{equation}\label{incidentFieldBS}
u_{\mrm{i}}=\sum_{n=1}^N \alpha_n e^{ik\boldsymbol{\theta}_n\cdot\boldsymbol{x}}=\sum_{n=1}^N \alpha_n e^{-\kappa \boldsymbol{\theta}_n\cdot\boldsymbol{x}},\qquad (\alpha_1,\dots,\alpha_N)^{\top}\in\R^{N}.
\end{equation}
We call $u$ the unique solution to the problem
\begin{equation}\label{PbKcomp}
\begin{array}{|rcll}
\multicolumn{4}{|l}{\mbox{Find }u\mbox{ such that}}\\
-\Delta u  & = & -\kappa^2 u & \mbox{ in }\Om\\
\partial_{\bfnu}u  & = & 0  & \mbox{ on }\Gamma \\
u_{\mrm{s}} & := & u-u_{\mrm{i}} & \in\mH^1(\Om).
\end{array}
\end{equation}
If $u(\cdot,\boldsymbol{\theta}_n)$ denotes the solution of (\ref{PbKcomp}) with $u_{\mrm{i}}=e^{-\kappa\boldsymbol{\theta}_n\cdot\boldsymbol{x}}$, then by linearity, we have $u=\sum_{n=1}^N \alpha_nu(\cdot,\boldsymbol{\theta}_n)$. Since $u=u_{\mrm{s}}+u_{\mrm{i}}$ in $\Om$ and $\partial_{\bfnu}u=0$ on $\Gamma$,  we have $u_{\mrm{s}}=u-u_{\mrm{i}}$ and  $\partial_{\bfnu}u_{\mrm{s}}=-\partial_{\bfnu}u_{\mrm{i}}$ on $\Gamma$. This allows us to write
\begin{equation}\label{estimEner1FreeSpace}
\begin{array}{lcl}
\dsp \int_{\Om} |\nabla u_{\mrm{s}}|^2+\kappa^2u_{\mrm{s}}^2\,d\bfx & = & \dsp \int_{\Om} |\nabla u_{\mrm{s}}|^2+ u_{\mrm{s}}\,\Delta u_{\mrm{s}}\,d\bfx \\[10pt]
 & = & \langle \partial_{\bfnu} u_{\mrm{s}},u_{\mrm{s}}  \rangle_{\Gamma}\ =\ -\langle \partial_{\bfnu} u_{\mrm{i}},(u-u_{\mrm{i}})  \rangle_{\Gamma}\\[10pt]
& = & -\dsp \int_{\mathcal{D}} |\nabla u_{\mrm{i}}|^2+\kappa^2u_{\mrm{i}}^2\,d\bfx -\langle \partial_{\bfnu} u_{\mrm{i}},u  \rangle_{\Gamma}.
\end{array}
\end{equation}
The ``-'' in front of the first term of the right-hand side of the above equation appears because $\boldsymbol{\nu}$ is orientated to the interior of $\mathcal{D}$. On the other hand, 
using (\ref{defScaMatB}), which also holds for $k$ such that $\Im m\,k>0$\footnote{Throughout the paper, the complex square root is chosen so that if $\xi=r e^{i\gamma}$ for $r\ge 0$ and $ \gamma \in[0;2\pi)$, then $\sqrt{\xi}=\sqrt{r}e^{i\gamma/2}$. With this choice, there holds $\Im m\,\sqrt{\xi}\ge 0$ for all $\xi\in\Cplx$.}, we find
\begin{equation}\label{estimEner2FreeSpace}
\langle \partial_{\bfnu} u_{\mrm{i}},u  \rangle_{\Gamma} = \sum_{m=1}^N\sum_{n=1}^N \alpha_m\alpha_n \langle \partial_{\bfnu} e^{-\kappa\boldsymbol{\theta}_m\cdot\boldsymbol{x}},u(\cdot,\boldsymbol{\theta}_n)\rangle_{\Gamma}= -c_d^{-1}\,\alpha^{\top}\mB(i\kappa)\,\alpha,
\end{equation}
where $\alpha=(\alpha_1,\dots,\alpha_N)^{\top}$. Gathering (\ref{estimEner1FreeSpace}) and (\ref{estimEner2FreeSpace}), we obtain the energy identity
\begin{equation}\label{estimEnerTotalFreespace1}
c_d^{-1}\,\alpha^{\top}\mB(i\kappa)\,\alpha  = \dsp \int_{\Om} |\nabla u_{\mrm{s}}|^2+\kappa^2u_{\mrm{s}}^2\,d\bfx +\dsp \int_{\mathcal{D}} |\nabla u_{\mrm{i}}|^2+\kappa^2u_{\mrm{i}}^2\,d\bfx.
\end{equation}
Assume that $\mathcal{D}\ne\emptyset$. If $\alpha=(\alpha_1,\dots,\alpha_N)^{\top}$ is an element of $\ker\,\mB(i\kappa)$, then, according to (\ref{estimEnerTotalFreespace1}), the field $u_{\mrm{i}}=\sum_{n=1}^N\alpha_n e^{-\kappa\boldsymbol{\theta}_n\cdot\boldsymbol{x}}$
must satisfy $u_{\mrm{i}}=0$ on $\mathcal{D}$. This implies that $\alpha$ is the null vector and proves that $\mB(i\kappa)$ is injective (or equivalently $\det\mB(i\kappa)\ne0$). Since the zeros of the analytic function $k\mapsto \det\mB(k)$ are isolated, we deduce the following result:
\begin{proposition}\label{PropoSoundSoftFS}\textsc{(Sound hard obstacle) --}
The set of non-scattering wavenumbers for Problem (\ref{PbChampTotalFreeSpace}) is either empty or discrete.
\end{proposition}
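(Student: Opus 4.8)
The plan is to exploit the analytic structure already assembled in the excerpt: since $k\mapsto\det\mB(k)$ extends to an analytic function on $\Lambda\setminus\mathscr{P}$, the principle of isolated zeros reduces the entire statement to exhibiting a single point of $\Lambda\setminus\mathscr{P}$ at which $\mB$ is injective. Thus I would not attempt to control the non-scattering wavenumbers directly; instead the whole proof collapses onto evaluating the energy identity (\ref{estimEnerTotalFreespace1}) at one conveniently chosen $k$.

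First I would fix $k=i\kappa$ with $\kappa>0$, which lies in $\{\Im m\,k>0\}\subset\Lambda\setminus\mathscr{P}$. Because the data $-\partial_{\bfnu}u_{\mrm{i}}$ are real whenever the $\boldsymbol{\theta}_n$ and the $\alpha_n$ are real, the associated fields are real-valued and it suffices to verify that $\mB(i\kappa)$ is injective as a matrix of $\R^{N\times N}$. Taking $\alpha\in\ker\mB(i\kappa)$ and inserting it into (\ref{estimEnerTotalFreespace1}) gives $\alpha^{\top}\mB(i\kappa)\,\alpha=0$; since the right-hand side is a sum of two nonnegative terms, both integrals must vanish. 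In particular $\int_{\mathcal{D}}|\nabla u_{\mrm{i}}|^2+\kappa^2u_{\mrm{i}}^2\,d\bfx=0$, which forces $u_{\mrm{i}}\equiv 0$ on the (nonempty, open) inclusion $\mathcal{D}$.

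The crux is then to pass from ``$u_{\mrm{i}}$ vanishes on $\mathcal{D}$'' to ``$\alpha=0$''. The incident field $u_{\mrm{i}}=\sum_{n=1}^N\alpha_n e^{-\kappa\boldsymbol{\theta}_n\cdot\bfx}$ is a finite combination of real-analytic functions, so its vanishing on the open set $\mathcal{D}$ propagates to $u_{\mrm{i}}\equiv 0$ on all of $\R^d$. The functions $\bfx\mapsto e^{-\kappa\boldsymbol{\theta}_n\cdot\bfx}$ are pure exponentials with pairwise distinct frequency vectors $-\kappa\boldsymbol{\theta}_n$ (the $\boldsymbol{\theta}_n$ being distinct and $\kappa>0$), hence linearly independent over $\R^d$; consequently $\alpha=0$. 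This proves $\mB(i\kappa)$ injective, so $\det\mB$ is not identically zero, and its zeros — precisely the non-scattering wavenumbers — form a discrete (possibly empty) set, as claimed.

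The only genuine obstacle I foresee is upstream of this argument, namely the justification that $k\mapsto\mB(k)$ really is analytic up to a discrete pole set: this rests on the meromorphic continuation of the resolvent $\mathcal{R}(k)$ and on the limiting-absorption identification $\mathcal{R}(k)(0,-\partial_{\bfnu}u_{\mrm{i}})=u_{\mrm{s}}$, both of which are quoted from the literature in the excerpt and may be taken as given. Beyond that, the entire proof reduces to the positivity of the quadratic form in (\ref{estimEnerTotalFreespace1}) together with unique continuation and the linear independence of exponentials, so I expect no further difficulty.
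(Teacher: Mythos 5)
Your proposal is correct and follows essentially the same route as the paper: analyticity of $k\mapsto\det\mB(k)$ plus the energy identity (\ref{estimEnerTotalFreespace1}) at $k=i\kappa$, forcing $u_{\mrm{i}}=0$ on $\mathcal{D}$ and hence $\alpha=0$ by linear independence of the exponentials. You even make explicit the final step (real-analyticity and independence of the $e^{-\kappa\boldsymbol{\theta}_n\cdot\bfx}$) that the paper leaves implicit.
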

\begin{remark}
The case of the sound soft obstacle, for which we replace, in (\ref{PbChampTotalFreeSpace}), the homogeneous Neumann boundary condition by a homogeneous Dirichlet boundary condition, can be treated in a similar way.
\end{remark}

\subsection{Penetrable inclusion}
\noindent We come back to the problem of the penetrable inclusion (Problem (\ref{PbChampTotalFreeSpaceIntrod})). As for $\mB(k)$, the map $k\mapsto \mA(k)$, where $\mA(k)\in\Cplx^{N\times N}$ is the relative scattering matrix defined in (\ref{defRelScaMat}), can be meromorphically continued to $\Lambda$, with $\Lambda$ equal to $\Cplx$ or to some Riemann surface according to the parity of the space dimension. Therefore, again, to prove that the set of non-scattering wavenumbers is either empty or discrete, it is sufficient to exhibit some $k=i\kappa$, with $\kappa>0$, such that $\mA(k)$ is injective. As explained above, it is sufficient to prove that $\mA(i\kappa)$ is invertible as an element of $\R^{N\times N}$. Consider an incident field like in (\ref{incidentFieldBS}) (we use the same notation). We call $u$ the unique solution to the problem
\[
\begin{array}{|rcll}
\multicolumn{4}{|l}{\mbox{Find }u\mbox{ such that}}\\
-\div(A\nabla u)  & = & k^2\rho\,u & \mbox{ in }\R^d\\
u_{\mrm{s}} & := & u-u_{\mrm{i}} & \in\mH^1(\Om).
\end{array}
\]
As in the previous subsection, we wish to establish energy identities to show that $\mA(i\kappa)\in\R^{N\times N}$ is invertible. Using (\ref{equationsFFP}) and working as in (\ref{estimEner2FreeSpace}), we derive
\begin{equation}\label{estimEnerTotal1}
\begin{array}{lcl}
c_d^{-1}\,\alpha^{\top}\mA(i\kappa)\,\alpha & = & \langle \partial_{\bfnu}u^{+}_{\mrm{s}},u_{\mrm{i}} \rangle_{\Gamma}-\langle \partial_{\bfnu}u^{+}_{\mrm{i}},u_{\mrm{s}}  \rangle_{\Gamma}
\end{array}
\end{equation}
In the right hand side of (\ref{estimEnerTotal1}), $\partial_{\bfnu}u^{+}_{\mrm{s}}$ and $\partial_{\bfnu} u^{+}_{\mrm{i}}$ refer, respectively, to the normal trace of $u_{\mrm{s}}|_{\Om}$ and $u_{\mrm{i}}|_{\Om}$ on $\Gamma$. We shall denote $\partial_{\bfnu} u^{-}_{\mrm{i}}$ the normal trace of $u_{\mrm{i}}|_{\mathcal{D}}$ on $\Gamma$. We define similarly $A\partial_{\bfnu} u^{-}$ and $\partial_{\bfnu} u^{+}$. Notice that on $\Gamma$, we have $A\partial_{\bfnu} u^{-}=\partial_{\bfnu} u^{+}$, $\partial_{\bfnu} u^{-}_{\mrm{i}}=\partial_{\bfnu} u^{+}_{\mrm{i}}$ and $\partial_{\bfnu} u^{+}=\partial_{\bfnu} u^{+}_{\mrm{i}}+\partial_{\bfnu} u^{+}_{\mrm{s}}$. Since $u_{\mrm{i}}=u-u_{\mrm{s}}$, we obtain
\begin{equation}\label{estimEnerTotal2}
\begin{array}{lcl}
\langle \partial_{\bfnu}u^{+}_{\mrm{s}},u_{\mrm{i}} \rangle_{\Gamma}-\langle \partial_{\bfnu}u^{+}_{\mrm{i}},u_{\mrm{s}}  \rangle_{\Gamma} & = &  \langle \partial_{\bfnu}u^{+},u_{\mrm{i}}  \rangle_{\Gamma}-\langle \partial_{\bfnu}u^{+}_{\mrm{i}},u_{\mrm{i}}  \rangle_{\Gamma} + \langle \partial_{\bfnu}u^{+}_{\mrm{s}},u_{\mrm{s}}  \rangle_{\Gamma}-\langle \partial_{\bfnu}u^{+},u_{\mrm{s}}  \rangle_{\Gamma}\\[4pt]
 & = & \langle A\partial_{\bfnu}u^{-},u_{\mrm{i}}-u_{\mrm{s}}  \rangle_{\Gamma}-\langle \partial_{\bfnu}u^{-}_{\mrm{i}},u_{\mrm{i}}  \rangle_{\Gamma} + \langle \partial_{\bfnu}u^{+}_{\mrm{s}},u_{\mrm{s}}  \rangle_{\Gamma}.
\end{array}
\end{equation}
In the following arguments, we shall write $F|\nabla \varphi|^2$ instead of $F\nabla \varphi\cdot\nabla \varphi$ when $F$ is a matrix and $\varphi$ is a function. Integrating by parts, we find from (\ref{estimEnerTotal1}), (\ref{estimEnerTotal2}) 
\begin{equation}\label{estimEnerTotal3}
\begin{array}{ll}
  & c_d^{-1}\,\alpha^{\top}\mA(i\kappa)\,\alpha \\[10pt]
= & -\dsp \int_{\mathcal{D}} A \nabla u\cdot\nabla(u_{\mrm{i}}-u_{\mrm{s}}) +\kappa^2 \rho\,u\,(u_{\mrm{i}}-u_{\mrm{s}})\,d\bfx+\dsp \int_{\mathcal{D}}|\nabla u_{\mrm{i}}|^2+\kappa^2 u_{\mrm{i}}^2\,d\bfx+\dsp \int_{\Om}|\nabla u_{\mrm{s}}|^2+\kappa^2 u_{\mrm{s}}^2\,d\bfx\\[10pt]
= & \dsp\int_{\R^d} A|\nabla u_{\mrm{s}}|^2+\kappa^2 \rho\,u_{\mrm{s}}^2\,d\bfx+\int_{\mathcal{D}} (\mrm{Id}-A)|\nabla u_{\mrm{i}}|^2+\kappa^2(1-\rho) u_{\mrm{i}}^2\,d\bfx.
\end{array}
\end{equation}
The above identity will allow us to conclude when $\mrm{Id}-A$ is nonnegative definite and $1-\rho$ is nonnegative. To handle the case where $A-\mrm{Id}$ is nonnegative definite and $\rho-1$ is nonnegative, now we prove a second energy identity. Starting like in (\ref{estimEnerTotal2}), we write
\begin{equation}\label{estimEnerTotal4}
\begin{array}{lcl}
\langle \partial_{\bfnu}u^{+}_{\mrm{s}},u_{\mrm{i}} \rangle_{\Gamma}-\langle \partial_{\bfnu}u^{+}_{\mrm{i}},u_{\mrm{s}}  \rangle_{\Gamma} & = &  \langle \partial_{\bfnu}u^{+}_{\mrm{s}},u\rangle_{\Gamma}-\langle \partial_{\bfnu}u^{+}_{\mrm{s}},u_{\mrm{s}} \rangle_{\Gamma}-\langle \partial_{\bfnu}u^{+}_{\mrm{i}},u_{\mrm{s}}  \rangle_{\Gamma}\\[4pt]
 & = &  \langle A\partial_{\bfnu}u^{-},u\rangle_{\Gamma}-\langle \partial_{\bfnu}u^{-}_{\mrm{i}},u+u_{\mrm{s}}\rangle_{\Gamma}-\langle \partial_{\bfnu}u^{+}_{\mrm{s}},u_{\mrm{s}} \rangle_{\Gamma}.
\end{array}
\end{equation}
From this expression, using (\ref{estimEnerTotal1}), we deduce
\begin{equation}\label{estimEnerTotal5}
\begin{array}{ll}
  & c_d^{-1}\,\alpha^{\top}\mA(i\kappa)\,\alpha \\[10pt]
= & -\dsp \int_{\mathcal{D}}A|\nabla u|^2+\kappa^2\rho\,u^2\,d\bfx+\dsp \int_{\mathcal{D}} \nabla u_{\mrm{i}}\cdot\nabla(u+u_{\mrm{s}}) +\kappa^2 u_{\mrm{i}}\,(u+u_{\mrm{s}})\,d\bfx-\dsp \int_{\Om}|\nabla u_{\mrm{s}}|^2+\kappa^2 u_{\mrm{s}}^2\,d\bfx\\[10pt]
= & -\dsp\int_{\R^d} |\nabla u_{\mrm{s}}|^2+\kappa^2 u_{\mrm{s}}^2\,d\bfx - \int_{\mathcal{D}} (A-\mrm{Id})|\nabla u|^2+\kappa^2(\rho-1) u^2\,d\bfx.
\end{array}
\end{equation}
The previous analysis leads us to formulate the two assumptions:
\begin{Assumption}\label{Assumption1}
In $\mathcal{D}$, $\mrm{Id}-A$ is nonnegative definite and $1-\rho$ is nonnegative. Moreover, there exists a constant $C$ and a non empty open set $\varpi\subset\mathcal{D}$ on which there hold $0< C|\zeta|^2 \le (\mrm{Id}-A)\zeta\cdot\zeta$  for all $\zeta\in\R^d$ or $ 0< C \le 1-\rho$.
\end{Assumption}
\begin{Assumption}\label{Assumption2}
In $\mathcal{D}$, $A-\mrm{Id}$ is nonnegative definite and $\rho-1$ is nonnegative. Moreover, there exists a constant $C$ and a non empty open set $\varpi\subset\mathcal{D}$ on which there hold $0< C|\zeta|^2 \le (A-\mrm{Id})\zeta\cdot\zeta$  for all $\zeta\in\R^d$ or $ 0< C \le \rho-1$.
\end{Assumption}
\noindent From relations (\ref{estimEnerTotal3}) and (\ref{estimEnerTotal5}), we obtain the following result.
\begin{proposition}\label{PropoPenetrableWaveguide}\textsc{(Penetrable inclusion) -- Assume that $A$ and $\rho$ verify Assumption \ref{Assumption1} or Assumption \ref{Assumption2}. Then the set of non-scattering wavenumbers for Problem (\ref{PbChampTotalFreeSpaceIntrod}) is either empty or discrete.}
\end{proposition}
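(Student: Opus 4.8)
The plan is to exploit the two energy identities (\ref{estimEnerTotal3}) and (\ref{estimEnerTotal5}) already at hand, together with the reduction noted just before the statement: because $k\mapsto\det\mA(k)$ is analytic on $\Lambda\setminus\mathscr{P}$ and its zeros are therefore isolated, it is enough to produce a single purely imaginary $k=i\kappa$, $\kappa>0$, at which $\mA(i\kappa)\in\R^{N\times N}$ is injective. Accordingly I would fix such a $\kappa$, take an arbitrary $\alpha\in\ker\mA(i\kappa)$, and aim to show $\alpha=0$, treating Assumptions \ref{Assumption1} and \ref{Assumption2} with the appropriate identity.

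Under Assumption \ref{Assumption1} I would read (\ref{estimEnerTotal3}) as expressing $c_d^{-1}\alpha^{\top}\mA(i\kappa)\alpha$ as a sum of two nonnegative terms: the first, $\int_{\R^d}A|\nabla u_{\mrm{s}}|^2+\kappa^2\rho\,u_{\mrm{s}}^2\,d\bfx$, is $\geq 0$ by coercivity of $A$ and $\rho\geq C>0$, while the second, $\int_{\mathcal{D}}(\mrm{Id}-A)|\nabla u_{\mrm{i}}|^2+\kappa^2(1-\rho)u_{\mrm{i}}^2\,d\bfx$, is $\geq 0$ by the sign conditions in $\mathcal{D}$. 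For $\alpha\in\ker\mA(i\kappa)$ the quadratic form vanishes, so both integrals vanish; using the strict lower bound on the open set $\varpi$ from Assumption \ref{Assumption1}, the second one forces either $\nabla u_{\mrm{i}}=0$ or $u_{\mrm{i}}=0$ on $\varpi$.

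The decisive step is to pass from this local vanishing to $\alpha=0$. Here I would invoke that $u_{\mrm{i}}=\sum_{n=1}^N\alpha_n e^{-\kappa\boldsymbol{\theta}_n\cdot\bfx}$ is real-analytic on $\R^d$, so that vanishing of $u_{\mrm{i}}$ (resp. of $\nabla u_{\mrm{i}}$) on the open set $\varpi$ propagates by unique continuation to $u_{\mrm{i}}\equiv 0$ (resp. $u_{\mrm{i}}$ constant) on all of $\R^d$. Since the $\boldsymbol{\theta}_n$ are distinct and lie on $\mathbb{S}^{d-1}$, the exponents $-\kappa\boldsymbol{\theta}_1,\dots,-\kappa\boldsymbol{\theta}_N$ are pairwise distinct and all nonzero; the linear independence of exponentials with distinct frequency vectors---after adjoining the frequency $0$, i.e. the constant, in the gradient case---then yields $\alpha=0$. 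This proves $\mA(i\kappa)$ injective and hence the claim.

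Under Assumption \ref{Assumption2} the argument is the mirror image, started from (\ref{estimEnerTotal5}), which now presents $c_d^{-1}\alpha^{\top}\mA(i\kappa)\alpha$ as minus a sum of two nonnegative terms. Vanishing of $\int_{\R^d}|\nabla u_{\mrm{s}}|^2+\kappa^2 u_{\mrm{s}}^2\,d\bfx$ gives $u_{\mrm{s}}=0$ on $\R^d$, that is $u=u_{\mrm{i}}$; the only extra bookkeeping is that the second term controls $u$ rather than $u_{\mrm{i}}$, so I would substitute $u=u_{\mrm{i}}$ on $\varpi\subset\mathcal{D}$ before running the same analyticity and independence argument. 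I expect the sign analysis and the reduction to $\varpi$ to be routine, the real obstacle being the unique-continuation/linear-independence step---in particular verifying that a nontrivial combination of distinct nonzero-frequency exponentials cannot reduce to a constant, which is exactly what disposes of the gradient case.
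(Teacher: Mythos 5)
Your proposal is correct and follows essentially the same route as the paper, which simply derives the result from the identities (\ref{estimEnerTotal3}) and (\ref{estimEnerTotal5}) at $k=i\kappa$ by the nonnegativity of the two terms, the strict bound on $\varpi$, and the linear independence of the (real-analytic) exponentials $e^{-\kappa\boldsymbol{\theta}_n\cdot\bfx}$. Your explicit treatment of the gradient case (adjoining the constant to the family of exponentials) and the substitution $u=u_{\mrm{i}}$ after $u_{\mrm{s}}\equiv 0$ under Assumption \ref{Assumption2} are exactly the details the paper leaves implicit, and they are handled correctly.
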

\noindent Using Equalities (\ref{estimEnerTotal3}) and (\ref{estimEnerTotal5}), we can provide other criteria, analogous to the ones derived in the study of the continuous interior transmission eigenvalue problem (see \cite{BoCH11,LaVa12,BoCh13}), which only involve the sign of $A-\mrm{Id}$ and $\rho-1$ in $\mathcal{D}\cap\mathcal{V}$ where $\mathcal{V}$ is a neighbourhood of $\Gamma=\partial\mathcal{D}$. To derive such criteria, we first prove a lemma which is very close to \cite[Proposition 2.1]{Sylv12}. For $\delta>0$, define $\mathcal{D}_{\delta}:=\{\bfx\in \mathcal{D}\,|\,\mrm{dist}(\bfx,\partial\mathcal{D})<\delta\}$. 
\begin{lemma}\label{lemmaSylv}
Let $\tilde{A}\in\mrm{L}^{\infty}(\mathcal{D},\R^{d\times d})$ and $\tilde{\rho}\in \mrm{L}^{\infty}(\mathcal{D},\R)$ be two parameters that satisfy  $\tilde{A}(\bfx)\zeta\cdot\zeta\ge C |\zeta|^2$ for all $\bfx\in\mathcal{D}$, $\zeta\in\R^d$ and $\tilde{\rho}(\bfx)\ge C$ for all $\bfx\in\mathcal{D}$, for some constant $C>0$. Assume that $\tilde{A}(\bfx)$ is symmetric for all $\bfx\in\mathcal{D}$. Consider some $\delta>0$. If $v\in\mH^1(\mathcal{D})$ verifies 
\begin{equation}\label{eqnInter}
-\div(\tilde{A}\nabla v) + \kappa^2\tilde{\rho}\,v=0,
\end{equation}
then there exists a constant $c>0$ independent of $v$, $\kappa$ such that, for $\kappa>0$ large enough,
\begin{equation}\label{estimSylv}
e^{c\kappa}\dsp\int_{\mathcal{D}\setminus\overline{\mathcal{D}_{\delta}}} |\nabla v|^2+\kappa^2 v^2\,d\bfx \le \dsp\int_{\mathcal{D}_{\delta}} v^2\,d\bfx.
\end{equation}
\end{lemma}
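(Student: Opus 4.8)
\noindent\emph{Proof idea.} The statement is an Agmon-type bound expressing that a solution of the coercive elliptic equation (\ref{eqnInter}), with large parameter $\kappa$, decays exponentially as one moves away from $\partial\mathcal{D}$ into the interior of $\mathcal{D}$. The plan is to establish it by a weighted energy estimate, testing (\ref{eqnInter}) against a cut-off, exponentially weighted copy of $v$. Set $d(\bfx):=\mrm{dist}(\bfx,\partial\mathcal{D})$, which is Lipschitz with $|\nabla d|\le 1$ almost everywhere. I would fix a small parameter $\beta>0$ (to be adjusted below, depending only on the ellipticity of $\tilde{A}$ and on the lower bound of $\tilde{\rho}$) and a cut-off $\chi\in\mrm{C}^{\infty}(\mathcal{D})$ with $0\le\chi\le 1$, $\chi=0$ on $\{\bfx\in\mathcal{D}\,:\,d(\bfx)<\delta/4\}$ and $\chi=1$ on $\{\bfx\in\mathcal{D}\,:\,d(\bfx)>\delta/2\}$, so that $\nabla\chi$ is supported in the shell $\{\delta/4\le d\le\delta/2\}\subset\mathcal{D}_{\delta}$. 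The function $\varphi:=\chi^2 e^{2\beta\kappa d}\,v$ then vanishes in a neighbourhood of $\partial\mathcal{D}$, hence is an admissible test function for the interior equation (\ref{eqnInter}) and produces no boundary contribution.

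Inserting $\varphi$ into the weak form of (\ref{eqnInter}) and expanding $\nabla\varphi$, I would obtain the identity
\[
\dsp\int_{\mathcal{D}}\chi^2 e^{2\beta\kappa d}\,\tilde{A}\nabla v\cdot\nabla v\,d\bfx+\kappa^2\dsp\int_{\mathcal{D}}\tilde{\rho}\,\chi^2 e^{2\beta\kappa d}\,v^2\,d\bfx=-\dsp\int_{\mathcal{D}}2\chi e^{2\beta\kappa d}\,v\,\tilde{A}\nabla v\cdot\nabla\chi\,d\bfx-\dsp\int_{\mathcal{D}}2\beta\kappa\,\chi^2 e^{2\beta\kappa d}\,v\,\tilde{A}\nabla v\cdot\nabla d\,d\bfx.
\]
By the ellipticity of $\tilde{A}$ and the positivity of $\tilde{\rho}$, the left-hand side is bounded below by a positive constant times $\int_{\mathcal{D}}\chi^2 e^{2\beta\kappa d}(|\nabla v|^2+\kappa^2 v^2)\,d\bfx$, while the two terms on the right are errors to be absorbed. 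The last one is estimated with $|\nabla d|\le 1$ and Young's inequality: it is controlled by a small fraction of the gradient part plus $C\beta^2\kappa^2\int_{\mathcal{D}}\chi^2 e^{2\beta\kappa d}v^2\,d\bfx$, and for $\beta$ small enough (independently of $\kappa$ and $v$) this last quantity is absorbed into the coercive $\kappa^2 v^2$ term. The first error term lives where $\nabla\chi\ne 0$, i.e. in $\{\delta/4\le d\le\delta/2\}\subset\mathcal{D}_{\delta}$, where $e^{2\beta\kappa d}\le e^{\beta\kappa\delta}$; a further Young's inequality bounds it by a small fraction of the gradient part plus $Ce^{\beta\kappa\delta}\int_{\mathcal{D}_{\delta}}v^2\,d\bfx$.

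After these absorptions there remains
\[
\dsp\int_{\mathcal{D}}\chi^2 e^{2\beta\kappa d}(|\nabla v|^2+\kappa^2 v^2)\,d\bfx\le Ce^{\beta\kappa\delta}\dsp\int_{\mathcal{D}_{\delta}}v^2\,d\bfx.
\]
On $\mathcal{D}\setminus\overline{\mathcal{D}_{\delta}}=\{d\ge\delta\}$ one has $\chi=1$ and $e^{2\beta\kappa d}\ge e^{2\beta\kappa\delta}$, so the left-hand side dominates $e^{2\beta\kappa\delta}\int_{\mathcal{D}\setminus\overline{\mathcal{D}_{\delta}}}(|\nabla v|^2+\kappa^2 v^2)\,d\bfx$. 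Dividing by $e^{\beta\kappa\delta}$ yields $e^{\beta\kappa\delta}\int_{\mathcal{D}\setminus\overline{\mathcal{D}_{\delta}}}(|\nabla v|^2+\kappa^2 v^2)\,d\bfx\le C\int_{\mathcal{D}_{\delta}}v^2\,d\bfx$, and for $\kappa$ large the fixed constant $C$ is absorbed by lowering the rate to $c:=\beta\delta/2$, which is exactly (\ref{estimSylv}).

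The step I expect to be the main obstacle is the joint calibration of the weight slope $\beta$ and the large parameter $\kappa$: the cross term generated by the gradient of the weight scales like $\beta^2\kappa^2 v^2$ and must be dominated by the genuine $\kappa^2\tilde{\rho}\,v^2$ term, which forces $\beta$ to be small but, crucially, \emph{independent} of $\kappa$ and $v$ — this is precisely what makes the resulting rate $c$ independent of $v$ and $\kappa$, as required by the statement. A minor additional care is the mere Lipschitz regularity of $d$, which is harmless since only $|\nabla d|\le 1$ almost everywhere is used (and one may otherwise replace $d$ by a smooth function of it with controlled gradient).
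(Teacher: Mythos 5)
Your argument is correct and proves the lemma; it is a weighted (Agmon-type) energy estimate of the same general family as the paper's, but the way the weight's gradient is tamed is genuinely different, so a comparison is worthwhile. The paper tests the equation against $E^2v$ with $E=e^{\eps\kappa\Phi_{\delta}}-1$ built from the \emph{truncated} distance $\Phi_{\delta}=\min(\mrm{dist}(\cdot,\partial\mathcal{D}),\delta)$: since $E$ vanishes on $\partial\mathcal{D}$ no cut-off is needed, and since $E$ is constant on $\mathcal{D}\setminus\overline{\mathcal{D}_{\delta}}$ the error term $\int_{\mathcal{D}}v^2\tilde{A}\nabla E\cdot\nabla E\,d\bfx$ is automatically supported in $\mathcal{D}_{\delta}$. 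Where you generate a cross term $2\beta\kappa\,\chi^2e^{2\beta\kappa d}v\,\tilde{A}\nabla v\cdot\nabla d$ and absorb it by Young's inequality at the price of taking the slope $\beta$ small, the paper instead uses the exact algebraic identity $\tilde{A}\nabla v\cdot\nabla(E^2v)=\tilde{A}\nabla(Ev)\cdot\nabla(Ev)-v^2\,\tilde{A}\nabla E\cdot\nabla E$, which produces no cross term at all, and then calibrates the slope to the specific value $\eps=\sqrt{c_2/c_1}$ (ratio of the lower bound of $\tilde{\rho}$ to the upper bound of $\tilde{A}$ near $\Gamma$) so that in $\tilde{A}\nabla E\cdot\nabla E-\kappa^2\tilde{\rho}E^2$ the dominant exponentials $e^{2\eps\kappa\Phi_{\delta}}$ cancel, leaving only $O(\kappa^2e^{\eps\kappa\delta})$ on $\mathcal{D}_{\delta}$. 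Your route is the more standard and flexible one (any sufficiently small $\beta$ independent of $\kappa$ and $v$ works, which is exactly the calibration point you correctly identify as the crux); the paper's route avoids any absorption step and yields an explicit, essentially optimal, exponential rate. Both arguments conclude identically, by comparing the weight on $\{d\ge\delta\}$ with its size on $\mathcal{D}_{\delta}$ and sacrificing part of the exponent to swallow the multiplicative constant for $\kappa$ large, so your proposal is a valid alternative proof of (\ref{estimSylv}).
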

\begin{remark}
In particular, the result of this lemma ensures that for large values of $\kappa$, functions satisfying (\ref{eqnInter}) are more and more localized in a neighbourhood of $\Gamma$. 
\end{remark}
\begin{proof}
We adopt an approach which is used for example in \cite[\S3.2]{CaDN10}. For $\delta>0$, set $\Phi_{\delta}(\bfx)=\min(\mrm{dist}(\bfx,\partial\mathcal{D}),\delta)$. Introduce $c_1$, $c_2$ two positive constants such that $\tilde{A}(\bfx)\zeta\cdot\zeta\le c_1 |\zeta|^2$ for all $\bfx\in\mathcal{D}_{\delta}$, $\zeta\in\R^d$ and $\tilde{\rho}(\bfx)\ge c_2$ for all $\bfx\in\mathcal{D}_{\delta}$. Define the function $E$ such that $E(\bfx)=e^{\eps\kappa \Phi_{\delta}(\bfx)}-1$ with $\eps=\sqrt{c_2/c_1}$ (this \textit{ad hoc} value for $\eps$ will be needed in (\ref{relationInter2})). It is known that for a Lipschitz boundary $\partial \mathcal{D}$, $\Phi_{\delta}$ is an element of $\mrm{L}^{\infty}(\mathcal{D})$ such that $\nabla \Phi_{\delta}\in(\mrm{L}^{\infty}(\mathcal{D}))^2$. Moreover, there holds $|\nabla \Phi_{\delta}|\le 1$ on $\mathcal{D}$. Multiplying (\ref{eqnInter}) by $E^2 v$, integrating by parts and noticing that $E^2 v=0$ on $\partial\mathcal{D}$, we find
\begin{equation}\label{eqnAfterIpp}
\int_{\mathcal{D}} \tilde{A}\nabla v\cdot\nabla(E^2 v)+\kappa^2\tilde{\rho}\,(E v)^2\,d\bfx=0.
\end{equation}
Writing 
\[
\tilde{A}\nabla v\cdot\nabla(E^2 v) = E^2 \tilde{A}\nabla v\cdot\nabla v + 2 E v\tilde{A}\nabla v\cdot\nabla E
\]
and 
\[
\tilde{A}\nabla(E v)\cdot\nabla(E v) = E^2 \tilde{A}\nabla v\cdot\nabla v + v^2\tilde{A}\nabla E\cdot\nabla E+2 E v\tilde{A}\nabla v\cdot\nabla E,
\]
we deduce 
\[
\tilde{A}\nabla v\cdot\nabla(E^2 v) = \tilde{A}\nabla(E v)\cdot\nabla(E v) - v^2\tilde{A}\nabla E\cdot\nabla E.
\]
Plugging this equality in (\ref{eqnAfterIpp}), we get the identity
\begin{equation}\label{identity}
\dsp\int_{\mathcal{D}} \tilde{A}\nabla(E v)\cdot\nabla(E v)+\kappa^2\tilde{\rho} (E v)^2\,d\bfx =
\dsp\int_{\mathcal{D}}v^2\tilde{A}\nabla E\cdot\nabla E\,d\bfx.
\end{equation}
Since $E$ is constant on $\mathcal{D}\setminus\overline{\mathcal{D}_{\delta}}$, from (\ref{identity}) we obtain
\begin{equation}\label{relationInter1}
\dsp\int_{\mathcal{D}\setminus\overline{\mathcal{D}_{\delta}}} \tilde{A}\nabla(E v)\cdot\nabla(E v)+\kappa^2\tilde{\rho} (E v)^2\,d\bfx \le
\dsp\int_{\mathcal{D}_{\delta}}v^2(\tilde{A}\nabla E\cdot\nabla E-\kappa^2\tilde{\rho} E^2)\,d\bfx.
\end{equation}
With our particular choice for $\eps$, on $\mathcal{D}_{\delta}$ we can write
\begin{equation}\label{relationInter2}
(\tilde{A}\nabla E\cdot\nabla E-\kappa^2\tilde{\rho} E^2)\le \kappa^2(\eps^2 c_1e^{2\eps\kappa \Phi_{\delta}(\bfx)}-c_2\,(e^{\eps\kappa \Phi_{\delta}(\bfx)}-1)^2) \le 2\,c_2\,\kappa^2 e^{\eps\kappa \delta}.
\end{equation}
Using (\ref{relationInter2}) in (\ref{relationInter1}) yields
\[
\dsp\frac{(e^{\eps\kappa \delta}-1)^2}{2\,c_2\,\kappa^2 e^{\eps\kappa \delta}}\int_{\mathcal{D}\setminus\overline{\mathcal{D}_{\delta}}} \tilde{A}\nabla v\cdot\nabla v+\kappa^2\tilde{\rho}  v^2\,d\bfx \le
\dsp\int_{\mathcal{D}_{\delta}}v^2\,d\bfx,
\]
which leads to (\ref{estimSylv}) for $\kappa$ large enough. 
\end{proof}
\noindent Now, we can use Lemma \ref{lemmaSylv} to localize to $\Gamma$ the assumptions made on $A$ and $\rho$.
\begin{Assumption}\label{Assumption3}
There is a neighbourhood of $\Gamma$, denoted $\mathcal{V}$, such that $\mrm{Id}-A$ is nonnegative definite on $\mathcal{D}\cap\mathcal{V}$. Moreover, there exists a constant $C$ such that there holds $0<C \le 1-\rho$ on $\mathcal{D}\cap\mathcal{V}$.
\end{Assumption}
\begin{Assumption}\label{Assumption4}
There is a neighbourhood of $\Gamma$, denoted $\mathcal{V}$, such that $A-\mrm{Id}$ is nonnegative definite on $\mathcal{D}\cap\mathcal{V}$. Moreover, there exists a constant $C$ such that there holds $0<C \le \rho-1$ on $\mathcal{D}\cap\mathcal{V}$.
\end{Assumption}
\begin{proposition}\label{PropoPenetrableWaveguideBis}\textsc{(Penetrable inclusion) -- Assume that $A$ and $\rho$ verify Assumption \ref{Assumption3} or Assumption \ref{Assumption4}. Then the set of non-scattering wavenumbers for Problem (\ref{PbChampTotalFreeSpaceIntrod}) is either empty or discrete.}
\end{proposition}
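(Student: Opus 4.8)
The plan is to follow the blueprint of Proposition \ref{PropoPenetrableWaveguide}: since $k\mapsto\det\mathscr{A}(k)$ is analytic on $\Lambda\setminus\mathscr{P}$, it suffices to exhibit a single $k=i\kappa$, with $\kappa>0$, for which $\mathscr{A}(i\kappa)$ is injective, and the principle of isolated zeros then gives discreteness (or emptiness). I would again start from the two energy identities: identity (\ref{estimEnerTotal3}) under Assumption \ref{Assumption3}, and identity (\ref{estimEnerTotal5}) under Assumption \ref{Assumption4}. The new difficulty, compared with Proposition \ref{PropoPenetrableWaveguide}, is that the sign conditions on $\mrm{Id}-A$, $1-\rho$ (resp.\ $A-\mrm{Id}$, $\rho-1$) now hold only on $\mathcal{D}\cap\mathcal{V}$, so the interior integral over $\mathcal{D}$ appearing in the identity is no longer sign-definite: the contribution from $\mathcal{D}\setminus\mathcal{V}$, where $A$ and $\rho$ are uncontrolled, may be negative. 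The whole point is to show that this indefinite contribution is negligible for $\kappa$ large.

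This is precisely where Lemma \ref{lemmaSylv} enters. Under Assumption \ref{Assumption3}, the incident field $u_{\mrm{i}}=\sum_n\alpha_n e^{-\kappa\boldsymbol{\theta}_n\cdot\boldsymbol{x}}$ satisfies $-\Delta u_{\mrm{i}}+\kappa^2 u_{\mrm{i}}=0$ in $\R^d$, so its restriction to $\mathcal{D}$ solves (\ref{eqnInter}) with $\tilde{A}=\mrm{Id}$ and $\tilde{\rho}=1$; under Assumption \ref{Assumption4}, since $k^2=-\kappa^2$, the total field satisfies $-\div(A\nabla u)+\kappa^2\rho\,u=0$ in $\mathcal{D}$, so $u|_{\mathcal{D}}$ solves (\ref{eqnInter}) with $\tilde{A}=A$, $\tilde{\rho}=\rho$, and the standing coercivity, positivity and symmetry hypotheses make the lemma applicable. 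I would fix $\delta>0$ small enough that $\mathcal{D}_{\delta}\subset\mathcal{D}\cap\mathcal{V}$ and write $v$ for the relevant field ($u_{\mrm{i}}$, resp.\ $u$).

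Next I would split the interior integral into its part over $\mathcal{D}\cap\mathcal{V}$ and its part over $\mathcal{D}\setminus\mathcal{V}\subset\mathcal{D}\setminus\overline{\mathcal{D}_{\delta}}$. On the good part, dropping the nonnegative gradient term and using $1-\rho\ge C$ (resp.\ $\rho-1\ge C$) on $\mathcal{D}_{\delta}$, one gets a lower bound $\ge C\kappa^2\int_{\mathcal{D}_{\delta}}v^2\,d\bfx$. On the bad part, the $\mrm{L}^{\infty}$ bounds on $A$ and $\rho$ together with estimate (\ref{estimSylv}) give a bound $\le C'\int_{\mathcal{D}\setminus\overline{\mathcal{D}_{\delta}}}|\nabla v|^2+\kappa^2 v^2\,d\bfx\le C'e^{-c\kappa}\int_{\mathcal{D}_{\delta}}v^2\,d\bfx$. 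Hence the full interior integral is bounded below by $(C\kappa^2-C'e^{-c\kappa})\int_{\mathcal{D}_{\delta}}v^2\,d\bfx$, which is $\ge 0$ for $\kappa$ large and vanishes only if $v=0$ on $\mathcal{D}_{\delta}$. Feeding this into (\ref{estimEnerTotal3}) or (\ref{estimEnerTotal5}), an element $\alpha\in\ker\mathscr{A}(i\kappa)$ forces $c_d^{-1}\alpha^{\top}\mathscr{A}(i\kappa)\alpha=0$, so the sum of the two nonnegative terms of the identity vanishes: $u_{\mrm{s}}=0$ in $\R^d$ and $v=0$ on $\mathcal{D}_{\delta}$. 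Under Assumption \ref{Assumption3} this already reads $u_{\mrm{i}}=0$ on the open set $\mathcal{D}_{\delta}$; under Assumption \ref{Assumption4}, the relation $u_{\mrm{s}}=0$ gives $u=u_{\mrm{i}}$ in $\R^d$, so again $u_{\mrm{i}}=0$ on $\mathcal{D}_{\delta}$. Since the exponentials $e^{-\kappa\boldsymbol{\theta}_n\cdot\boldsymbol{x}}$ with distinct $\boldsymbol{\theta}_n$ are linearly independent real-analytic functions, the vanishing of their combination on a nonempty open set yields $\alpha=0$. Thus $\mathscr{A}(i\kappa)$ is injective for $\kappa$ large, which concludes.

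I expect the main obstacle to be the competition between the exponential gain $e^{-c\kappa}$ furnished by Lemma \ref{lemmaSylv} on $\mathcal{D}\setminus\overline{\mathcal{D}_{\delta}}$ and the merely polynomial weight $\kappa^2$ carried by the good boundary term: one must fix $\delta$ first (so that $\mathcal{D}_{\delta}\subset\mathcal{V}$ and the constants $C,C'$ are fixed independently of $\kappa$), and only then let $\kappa\to+\infty$, so that the positive term genuinely dominates. A secondary point requiring care is checking that the hypotheses of Lemma \ref{lemmaSylv} (coercivity and symmetry of $\tilde{A}$, positivity of $\tilde{\rho}$) are met by the field chosen in each of the two cases, which is guaranteed by the standing assumptions on $A$ and $\rho$.
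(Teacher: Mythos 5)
Your proposal is correct and follows essentially the same route as the paper: reduce to injectivity of $\mA(i\kappa)$ for one large $\kappa$ via analyticity of $k\mapsto\det\mA(k)$, use identity (\ref{estimEnerTotal3}) (resp. (\ref{estimEnerTotal5})) together with Lemma \ref{lemmaSylv} applied to $u_{\mrm{i}}$ with $\tilde{A}=\mrm{Id}$, $\tilde{\rho}=1$ (resp. to $u$ with $\tilde{A}=A$, $\tilde{\rho}=\rho$) to make the uncontrolled contribution away from $\Gamma$ negligible against the coercive boundary-layer term, and conclude $u_{\mrm{i}}=0$ on $\mathcal{D}_{\delta}$, hence $\alpha=0$. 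The only (immaterial) differences are that you keep the exponential factor $e^{-c\kappa}$ where the paper crudely bounds it by a constant, and that you spell out the Assumption \ref{Assumption4} case (using $u_{\mrm{s}}=0$ to pass from $u=0$ to $u_{\mrm{i}}=0$ on $\mathcal{D}_{\delta}$), which the paper leaves to the reader.
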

\begin{proof}
Assume that $A$ and $\rho$ verify Assumption \ref{Assumption3}. Introduce $\delta>0$ small enough so that the set $\mathcal{D}_{\delta}=\{\bfx\in \mathcal{D}\,|\,\mrm{dist}(\bfx,\partial\mathcal{D})<\delta\}$ verifies $\mathcal{D}_{\delta}\subset \mathcal{V}$. If $\alpha=(\alpha_1,\dots,\alpha_N)^{\top}$ is an element of $\ker\,\mA(i\kappa)$ then, according to (\ref{estimEnerTotal3}), the field $u_{\mrm{i}}=\sum_{n=1}^N\alpha_n e^{-\kappa\boldsymbol{\theta}_n\cdot\boldsymbol{x}}$ and the associated scattered field $u_{\mrm{s}}$ must verify
\begin{equation}\label{eqnInterTer}
0=\dsp\int_{\R^d} A|\nabla u_{\mrm{s}}|^2+\kappa^2 \rho\,u_{\mrm{s}}^2\,d\bfx+\int_{\mathcal{D}} (\mrm{Id}-A)|\nabla u_{\mrm{i}}|^2+\kappa^2(1-\rho) u_{\mrm{i}}^2\,d\bfx.
\end{equation}
The boundedness of $A$, $\rho$ and Lemma \ref{lemmaSylv} applied to $u_{\mrm{i}}$ (with $\tilde{A}=\mrm{Id}$ and $\tilde{\rho}=1$) allow to write \begin{equation}\label{eqnInterBis}
\begin{array}{ll}
 & \dsp\int_{\mathcal{D}} (\mrm{Id}-A)|\nabla u_{\mrm{i}}|^2+\kappa^2(1-\rho) u_{\mrm{i}}^2\,d\bfx \\[10pt]
\ge & \dsp\int_{\mathcal{D}_{\delta}} (\mrm{Id}-A)|\nabla u_{\mrm{i}}|^2+\kappa^2(1-\rho) u_{\mrm{i}}^2\,d\bfx - c\,\dsp\int_{\mathcal{D}\setminus\overline{\mathcal{D}_{\delta}}} |\nabla u_{\mrm{i}}|^2+\kappa^2 u_{\mrm{i}}^2\,d\bfx \\[10pt]
\ge & \dsp\int_{\mathcal{D}_{\delta}} (\mrm{Id}-A)|\nabla u_{\mrm{i}}|^2+\kappa^2(1-\rho) u_{\mrm{i}}^2\,d\bfx - c\,\dsp\int_{\mathcal{D}_{\delta}} u_{\mrm{i}}^2\,d\bfx\ , \\[10pt]
\end{array}
\end{equation}
where $c>0$ is a constant independent of $\kappa$ and $\alpha$ which may change from one line to another. Plugging (\ref{eqnInterBis}) in (\ref{eqnInterTer}) and using that $0<C\le 1-\rho$ on $\mathcal{D}_{\delta}$, we deduce 
\[
0 \ge c\,\kappa^2\dsp\int_{\mathcal{D}_{\delta}}u_{\mrm{i}}^2\,d\bfx
\]
for $\kappa$ large enough. This implies $u_{\mrm{i}}=0$ on $\mathcal{D}_{\delta}$, which is possible if and only if $\alpha$ is the null vector. Therefore, $\mA(i\kappa)$ is injective for $\kappa$ large enough and the analyticity of the map $k\mapsto \det\mA(k)$ leads to the conclusion of the proposition. The case where $A$ and $\rho$ verify Assumption \ref{Assumption4} can be treated similarly using Lemma \ref{lemmaSylv} applied to $u$ with $\tilde{A}=A$ and $\tilde{\rho}=\rho$.
\end{proof}
\begin{remark}
As mentioned previously, the technique to prove Proposition \ref{PropoPenetrableWaveguideBis} is directly inspired by what has been done to consider the continuous interior transmission eigenvalue problem. However, in the discrete setting, Assumption \ref{Assumption3} and Assumption \ref{Assumption4} may probably be relaxed. Thus, one could imagine that imposing conditions on the physical parameters $A$ and $\rho$ only in some particular regions of $\Gamma$, associated with the directions of the incident waves, is enough to obtain the result of Proposition \ref{PropoPenetrableWaveguideBis}. This is obvious when there is only one incident direction. And when there are more and more incident directions, one would retrieve the conditions of Assumption \ref{Assumption3} and Assumption \ref{Assumption4}. But obtaining such a criterion for a given finite number of incident directions is still an open problem.
\end{remark}

\subsection{Open questions}
Around these questions, we can formulate a series of problems we do not know how to solve. Can we prove that the set of non-scattering wavenumbers is discrete or empty when the scattering directions are not equal to the opposite of the incident directions? Can we show that the non-scattering wavenumbers for Problem (\ref{PbChampTotalFreeSpace}) (resp. (\ref{PbChampTotalFreeSpaceIntrod})) do not accumulate at $0$? Can we relax Assumptions \ref{Assumption1}, \ref{Assumption2}, \ref{Assumption3} and \ref{Assumption4}? Do the non-scattering wavenumbers in this discrete setting (if they exist) converge to the transmission eigenvalues of the continuous framework when the number of incident and scattering directions tend to $+\infty$?

\section{Far field invisibility}\label{FFInvisibility}
In this section, we change the point of view. Let us consider $k>0$ a given wavenumber and $\mathcal{D}$ a given domain. We want to construct an inclusion supported in $\overline{\mathcal{D}}$ for which the relative scattering matrix $\mA(k)$ defined in (\ref{defRelScaMat}) is null. We assume that the physical coefficient $A$ verifies $A=\mrm{Id}$ in $\R^d$ so that the material of the inclusion is characterized only by the real valued parameter $\rho$. Then, the scattering of the incident field $u_{\mrm{i}}=\sum_{n=1}^{N}\alpha_n e^{i k \boldsymbol{\theta}_n\cdot\boldsymbol{x}}$ is described by the problem
\begin{equation}\label{PbChampTotalFreeSpaceBis}
\begin{array}{|rcll}
\multicolumn{4}{|l}{\mbox{Find }u\in\mH^1_{\loc}(\R^d)\mbox{ such that}}\\[4pt]
-\Delta u & = & k^2 \rho\,u & \mbox{ in }\R^d,\\
u & = & u_{\mrm{i}}+u_{\mrm{s}} & \mbox{ in }\R^d,\\[4pt]
\multicolumn{4}{|c}{\dsp\lim_{r\to +\infty} r^{\frac{d-1}{2}}\left( \frac{\partial u_{\mrm{s}}}{\partial r}-ik u_{\mrm{s}} \right)  = 0.}
\end{array}
\end{equation}
We search for $\rho$ under the form  $\rho=1+\eps\mu$ where $\eps>0$ is a parameter small with respect to $1$ and where $\mu\in \mrm{L}^{\infty}(\R^d,\R)$ is a function supported in $\overline{\mathcal{D}}$. We emphasize that the inclusion we wish to create is a small perturbation of the reference material. This is a key element of the approach we will follow. The technique we will use has been introduced in \cite{Naza11,Naza11c,Naza12,Naza13,CaNR12,Naza11b} with the concept of ``enforced stability for embedded eigenvalues''. In these works, the authors develop a method for constructing small regular and singular perturbations of a waveguide that preserve the multiplicity of the point spectrum on a given interval of the continuous spectrum. The approach has been adapted in \cite{BoNa13} (see also \cite{BoNTSu} for an application to a water wave problem) to prove the existence of regular perturbations of a waveguide, for which several waves at given frequencies pass through without any distortion or with only a phase shift.
\subsection{One incident direction and $N$ scattering directions}\label{paragraphInvisi}
To simplify the presentation of the method, we first assume that there is only one incident direction $\boldsymbol{\theta}_{\mrm{i}}$ (\textit{i.e.} in (\ref{PbChampTotalFreeSpaceBis}), we take  $u_{\mrm{i}}=e^{i k \boldsymbol{\theta}_{\mrm{i}}\cdot\bfx}$) and $N$ scattering directions $\boldsymbol{\theta}_1,\dots,\boldsymbol{\theta}_N$. For a given $\eps>0$, we denote $u^{\eps}$ the solution of Problem (\ref{PbChampTotalFreeSpaceBis}) with $\rho=1+\eps \mu$. We proceed to a formal asymptotic expansion of $u^{\eps}$. This function admits the decomposition $u^{\eps}=u_{\mrm{i}}+u_{\mrm{s}}^{\eps}$ where $u_{\mrm{s}}^{\eps}$ corresponds to the field scattered by the inclusion. We take the simplest ansatz for a regular perturbation of a partial differential equation \cite{Kato66,MaNP00} 
\[
u_{\mrm{s}}^{\eps}=0+\eps \hat{u} +\eps^2 \tilde{u}+\dots,
\]
where the correction terms $\hat{u}$, $\tilde{u}$ have to be determined and where the dots stand for higher order terms insignificant in our asymptotic analysis. Let us point out that we choose an ansatz for the scattered field which vanishes at the zero order because $\rho=1+\eps\mu$ is a perturbation of order $\eps$ of the parameter of the reference material (which does not produce any scattered field). Plugging the expression of $u^{\eps}$ and the expansion of $u_{\mrm{s}}^{\eps}$ in (\ref{PbChampTotalFreeSpaceBis}), we find that $\hat{u}$ and $\tilde{u}$ are uniquely determined as the solutions of the problems 
\begin{equation}\label{ChampScaOrdres}
\begin{array}{|rcll}
\multicolumn{4}{|l}{\mbox{Find }\hat{u}\in\mH^1_{\loc}(\R^d)\mbox{ such that}}\\[4pt]
-\Delta \hat{u}-k^2\hat{u} & = & k^2 \mu u_{\mrm{i}} & \mbox{ in }\R^d,\\
\multicolumn{4}{|c}{\dsp\lim_{r\to +\infty} r^{\frac{d-1}{2}}\left( \frac{\partial \hat{u}}{\partial r}-ik \hat{u} \right)  = 0}
\end{array}\qquad\qquad\begin{array}{|rcll}
\multicolumn{4}{|l}{\mbox{Find }\tilde{u}\in\mH^1_{\loc}(\R^d)\mbox{ such that}}\\[4pt]
-\Delta \tilde{u}-k^2\,\tilde{u} & = & k^2 \mu \hat{u} & \mbox{ in }\R^d,\\
\multicolumn{4}{|c}{\dsp\lim_{r\to +\infty} r^{\frac{d-1}{2}}\left( \frac{\partial \tilde{u}}{\partial r}-ik \tilde{u} \right)  = 0.}
\end{array}
\end{equation}
From (\ref{FFPtermeSource}), we deduce that the far field pattern of $u_{\mrm{s}}^{\eps}$ in the direction of observation $\boldsymbol{\theta}_{\mrm{s}}$, denoted $u_{\mrm{s}}^{\eps\,\infty}(\boldsymbol{\theta}_{\mrm{s}},\boldsymbol{\theta}_{\mrm{i}})$, admits the asymptotic expansion
\begin{equation}\label{FFPOrdre2}
u_{\mrm{s}}^{\eps\,\infty}(\boldsymbol{\theta}_{\mrm{s}},\boldsymbol{\theta}_{\mrm{i}}) =0+ \eps\,c_d\,k^2\dsp \int_{\mathcal{D}} \mu\,e^{ik(\boldsymbol{\theta}_{\mrm{i}}-\boldsymbol{\theta}_{\mrm{s}})\cdot\bfx}\,d\bfx +  \eps^2\,c_d\,k^2\dsp \int_{\mathcal{D}} \mu\,\hat{u}\,e^{-ik \boldsymbol{\theta}_{\mrm{s}}\cdot\bfx}\,d\bfx+\dots\ .
\end{equation}
Observing (\ref{FFPOrdre2}), we see it is easy to find functions $\mu$ such that $u_{\mrm{s}}^{\eps\,\infty}(\boldsymbol{\theta}_n,\boldsymbol{\theta}_{\mrm{i}})$ vanishes at order $\eps$ for $n=1,\dots,N$. However, this is not sufficient since we want to impose $u_{\mrm{s}}^{\eps\,\infty}(\boldsymbol{\theta}_n,\boldsymbol{\theta}_{\mrm{i}})=0$ (at any order in $\eps$). To control the higher order terms in $\eps^2$, $\eps^3$,$\dots$ whose dependence with respect to $\mu$ is less simple than for the first term of the asymptotics, we will use the fixed point theorem. To obtain a fixed point formulation, we look for $\mu$ under the form 
\begin{equation}\label{ExpressMu}
\mu=\mu_0+\dsp\sum_{n=1}^N \tau_{1,n}\,\mu_{1,n} + \dsp\sum_{n=1}^N \tau_{2,n}\, \mu_{2,n}.
\end{equation}
In this expression, $\tau_{1,n}$, $\tau_{2,n}$ are real parameters that we will tune to achieve invisibility in the directions $\boldsymbol{\theta}_1,\dots,\boldsymbol{\theta}_N$. We need $2N$ real parameters because we want to cancel $N$ complex coefficients. Moreover, in (\ref{ExpressMu}) $\mu_{0}$, $\mu_{1,n}$, $\mu_{2,n}$ are given real valued functions, supported in  $\overline{\mathcal{D}}$, verifying
\begin{equation}\label{RelOrtho}
\begin{array}{lcllcl}
\dsp\int_{\mathcal{D}} \mu_{0}\cos(k(\boldsymbol{\theta}_{\mrm{i}}-\boldsymbol{\theta}_{n'})\cdot\boldsymbol{x}))\,d\boldsymbol{x} &= & 0,\qquad & \qquad\dsp\int_{\mathcal{D}} \mu_{0}\sin(k(\boldsymbol{\theta}_{\mrm{i}}-\boldsymbol{\theta}_{n'})\cdot\boldsymbol{x}))\,d\boldsymbol{x} &= & 0\\[12pt]
\dsp\int_{\mathcal{D}} \mu_{1,n}\cos(k(\boldsymbol{\theta}_{\mrm{i}}-\boldsymbol{\theta}_{n'})\cdot\boldsymbol{x}))\,d\boldsymbol{x} &= & \delta^{nn'},\qquad & \qquad\dsp\int_{\mathcal{D}} \mu_{1,n}\sin(k(\boldsymbol{\theta}_{\mrm{i}}-\boldsymbol{\theta}_{n'})\cdot\boldsymbol{x}))\,d\boldsymbol{x} &= & 0\\[12pt]
\dsp\int_{\mathcal{D}} \mu_{2,n}\cos(k(\boldsymbol{\theta}_{\mrm{i}}-\boldsymbol{\theta}_{n'})\cdot\boldsymbol{x}))\,d\boldsymbol{x} &= & 0,\qquad & \qquad \dsp\int_{\mathcal{D}} \mu_{2,n}\sin(k(\boldsymbol{\theta}_{\mrm{i}}-\boldsymbol{\theta}_{n'})\cdot\boldsymbol{x}))\,d\boldsymbol{x} &= & \delta^{nn'}
\end{array}
\end{equation}
for all $n,n'=1,\dots,N$. In (\ref{RelOrtho}), $\delta^{nn'}$ denotes the Kronecker delta such that $\delta^{nn'}=1$ if $n=n'$ and $\delta^{nn'}=0$ else. At this stage, it is important to assume that there holds $\boldsymbol{\theta}_{\mrm{i}}-\boldsymbol{\theta}_{n'}\ne0$ for $n'=1,\dots, N$ so that we can indeed find functions $\mu_{2,n}$ which satisfy the last equation of (\ref{RelOrtho}). In \S\ref{SubSectionNumExpe}, dedicated to numerical experiments, we will explain how to build explicit functions $\mu_{1,n}$, $\mu_{2,n}$ verifying (\ref{RelOrtho}). With this particular choice for $\mu$, plugging (\ref{ExpressMu}) in (\ref{FFPOrdre2}) and using (\ref{RelOrtho}), we obtain, for $n=1,\dots,N$,  the expansion
\begin{equation}\label{FFPTotal}
u_{\mrm{s}}^{\eps\,\infty}(\boldsymbol{\theta}_n,\boldsymbol{\theta}_{\mrm{i}}) = \eps\,c_d\,k^2\,(\tau_{1,n}+i\tau_{2,n})+\eps^2\,c_d\,k^2\,(F^{\eps}_{1,n}(\tau)+iF^{\eps}_{2,n}(\tau)),
\end{equation}
where $F^{\eps}_{1,n}$, $F^{\eps}_{2,n}$ denote some functions (whose precise definition can be found in (\ref{FFexpression1})) of $\eps$ and $\tau:= (\tau_{1,1},\dots,\tau_{1,N},\tau_{2,1},\dots,\tau_{2,N})^{\top}$. Now, to impose $u_{\mrm{s}}^{\eps\,\infty}(\boldsymbol{\theta}_n,\boldsymbol{\theta}_{\mrm{i}})=0$ for $n=1,\dots,N$, we see from (\ref{FFPTotal}) that we have to solve the problem \\
\begin{equation}\label{PbFixedPoint}
\begin{array}{|l}
\mbox{Find }\tau\in\R^{2N}\mbox{ such that }\tau = F^{\eps}(\tau),
\end{array}~\\[5pt]
\end{equation}
with 
\begin{equation}\label{DefMapContra}
F^{\eps}(\tau):= -\eps\,(F^{\eps}_{1,1}(\tau),\dots,F^{\eps}_{1,N}(\tau),F^{\eps}_{2,1}(\tau),\dots,F^{\eps}_{2,N}(\tau))^{\top}. 
\end{equation}
Lemma \ref{lemmaTech} hereafter ensures that for any given parameter $\gamma>0$, the map $F^{\eps}$ is a contraction of $\mathbb{B}_\gamma:=\{\tau\in\R^{2N}\,\big|\,|\tau| \le \gamma\}$ for $\eps$ small enough. Therefore, the Banach fixed-point theorem guarantees the existence of some $\eps_0>0$ such that for all $\eps\in(0;\eps_0]$, Problem (\ref{PbFixedPoint}) has a unique solution in $\mathbb{B}_\gamma$.\\
\newline
It is important to observe that because of the orthogonality conditions (\ref{RelOrtho}), we are sure that the constructed function $\mu$ defined by (\ref{ExpressMu}) verifies $\mu\not\equiv0$ when $\mu_0\not\equiv0$. As a consequence, we indeed defined a non trivial perturbation of the reference medium, supported in $\overline{\mathcal{D}}$, which is such that the far field pattern of the scattered field associated with the incident plane wave $u_{\mrm{i}}=e^{i k \boldsymbol{\theta}_{\mrm{i}}\cdot\boldsymbol{x}}$, vanishes in the directions $\boldsymbol{\theta}_1,\dots,\boldsymbol{\theta}_N$. \\
\newline
Let us summarize this result in the following statement.
\begin{proposition}
Let $k>0$ be a given wavenumber and $\mathcal{D}$ refer to a given Lipschitz domain. Consider $\boldsymbol{\theta}_{\mrm{i}},\boldsymbol{\theta}_1,\dots,\boldsymbol{\theta}_N$ $N+1$ elements of $\mathbb{S}^{d-1}$ such that $\boldsymbol{\theta}_n\ne \boldsymbol{\theta}_{\mrm{i}}$ for $n=1,\dots,N$. Define the incident plane wave $u_{\mrm{i}}:=e^{i k \boldsymbol{\theta}_{\mrm{i}}\cdot\boldsymbol{x}}$. Then, there exists a non trivial parameter $\rho$, with $\rho-1$ supported in $\overline{\mathcal{D}}$, such that the far field pattern of the scattered field defined by (\ref{PbChampTotalFreeSpaceBis}) vanishes in the directions $\boldsymbol{\theta}_1,\dots,\boldsymbol{\theta}_N$.
\end{proposition}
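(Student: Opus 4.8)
The plan is to realise the invisible inclusion as a small perturbation $\rho=1+\eps\mu$ of the homogeneous background, and to tune $\mu$ so that the $N$ leading far-field coefficients can be prescribed at will while the higher-order tail is absorbed by a fixed-point argument. Concretely, I would first insert the ansatz $u_{\mrm{s}}^{\eps}=\eps\hat{u}+\eps^2\tilde{u}+\dots$ into (\ref{PbChampTotalFreeSpaceBis}) and collect powers of $\eps$, which produces the cascade of radiating Helmholtz problems (\ref{ChampScaOrdres}) for $\hat{u}$ and $\tilde{u}$; since $A=\mrm{Id}$ and the right-hand sides are compactly supported, each of these is the outgoing free-space resolvent applied to an $\mL^2_c$ source and is therefore uniquely solvable in $\mH^1_{\loc}(\R^d)$ for the given $k>0$. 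Feeding this expansion into the source representation (\ref{FFPtermeSource}) yields (\ref{FFPOrdre2}), whose $\eps$-term is the linear functional $\mu\mapsto c_d k^2\int_{\mathcal{D}}\mu\,e^{ik(\boldsymbol{\theta}_{\mrm{i}}-\boldsymbol{\theta}_n)\cdot\bfx}\,d\bfx$.

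The structural step is then to seek $\mu$ in the form (\ref{ExpressMu}), where the fixed profiles $\mu_0,\mu_{1,n},\mu_{2,n}$ satisfy the biorthogonality relations (\ref{RelOrtho}) against the real and imaginary parts $\cos(k(\boldsymbol{\theta}_{\mrm{i}}-\boldsymbol{\theta}_{n'})\cdot\bfx)$ and $\sin(k(\boldsymbol{\theta}_{\mrm{i}}-\boldsymbol{\theta}_{n'})\cdot\bfx)$ of the leading kernels. This is exactly where the hypothesis $\boldsymbol{\theta}_n\ne\boldsymbol{\theta}_{\mrm{i}}$ enters: it makes each of these $2N$ test functions nonconstant on $\mathcal{D}$, so that one can indeed construct $2N$ profiles dual to them. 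With this choice the $\eps$-order of $u_{\mrm{s}}^{\eps\,\infty}(\boldsymbol{\theta}_n,\boldsymbol{\theta}_{\mrm{i}})$ collapses to $\eps c_d k^2(\tau_{1,n}+i\tau_{2,n})$, as in (\ref{FFPTotal}), so that cancelling all $N$ far-field values reduces to the fixed-point equation $\tau=F^{\eps}(\tau)$ of (\ref{PbFixedPoint})--(\ref{DefMapContra}), $F^{\eps}$ collecting the $\eps^2$ and higher-order contributions.

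I would close the argument with the Banach fixed-point theorem on $\mathbb{B}_\gamma$. The main obstacle is precisely the contraction estimate, namely showing that $\tau\mapsto F^{\eps}(\tau)$ maps $\mathbb{B}_\gamma$ into itself and is Lipschitz with constant strictly less than $1$ for $\eps$ small. This demands uniform-in-$\tau$ control of the correction fields and of their dependence on the coefficient: one needs the radiating Helmholtz solution operator to be bounded from the $\mL^2(\mathcal{D})$ source to $\mH^1(\mathcal{D})$ and Lipschitz in $\mu$, so that the quadratic and higher remainders in (\ref{FFPTotal}) are $O(\eps)$ uniformly on $\mathbb{B}_\gamma$; this is the content of Lemma \ref{lemmaTech}. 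Granting it, for $\eps\le\eps_0$ there is a unique $\tau\in\mathbb{B}_\gamma$ with $u_{\mrm{s}}^{\eps\,\infty}(\boldsymbol{\theta}_n,\boldsymbol{\theta}_{\mrm{i}})=0$ for all $n$. Non-triviality is then automatic: if $\mu\equiv0$, pairing with the kernels and invoking (\ref{RelOrtho}) forces $\tau_{1,n}=\tau_{2,n}=0$ and hence $\mu_0\equiv0$, so choosing $\mu_0\not\equiv0$ produces a genuine inclusion with $\rho-1$ supported in $\overline{\mathcal{D}}$, completing the proof.
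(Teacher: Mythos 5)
Your proposal follows essentially the same route as the paper's own construction: the regular perturbation ansatz $\rho=1+\eps\mu$, the expansion (\ref{FFPOrdre2}), the decomposition (\ref{ExpressMu}) with the biorthogonality relations (\ref{RelOrtho}) (where, as you note, $\boldsymbol{\theta}_n\ne\boldsymbol{\theta}_{\mrm{i}}$ is needed so that the sine kernels do not vanish identically and dual profiles $\mu_{2,n}$ exist), the reduction to the fixed-point problem (\ref{PbFixedPoint}), and the contraction property of Lemma \ref{lemmaTech}. Your non-triviality argument via $\mu_0\not\equiv0$ also matches the paper's, so the proof is correct as proposed.
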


\noindent In the following lemma, we show some properties of the operator $F^{\eps}$ that we used in the previous analysis.
\begin{lemma}\label{lemmaTech}
Let $\gamma>0$ be a given parameter. Then, there exists $\eps_0>0$ such that for all $\eps\in(0;\eps_0]$, the map $F^{\eps}$ is a contraction of $\mathbb{B}_\gamma=\{\tau\in\R^{2N}\,\big|\,|\tau| \le \gamma\}$.
\end{lemma}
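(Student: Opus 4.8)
The plan is to exploit the explicit prefactor $\eps$ in the definition (\ref{DefMapContra}) of $F^\eps$: it suffices to show that each component $F^\eps_{j,n}$ ($j\in\{1,2\}$, $n\in\{1,\dots,N\}$) is bounded and Lipschitz on $\mathbb{B}_\gamma$ \emph{with constants that stay bounded as $\eps\to 0^+$}. Indeed, if $|F^\eps_{j,n}(\tau)|\le C$ and $|F^\eps_{j,n}(\tau)-F^\eps_{j,n}(\tau')|\le C\,|\tau-\tau'|$ uniformly for small $\eps$ and $\tau,\tau'\in\mathbb{B}_\gamma$, then $|F^\eps(\tau)|\le \eps\,C\sqrt{2N}$ and $|F^\eps(\tau)-F^\eps(\tau')|\le \eps\,C\sqrt{2N}\,|\tau-\tau'|$, so choosing $\eps_0$ with $\eps_0 C\sqrt{2N}\le\min(\gamma,1/2)$ makes $F^\eps$ map $\mathbb{B}_\gamma$ into itself and be a contraction for all $\eps\in(0;\eps_0]$. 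The starting point is the identity obtained by inserting $u^\eps=u_{\mrm{i}}+u^\eps_{\mrm{s}}$ into the exact far field formula (\ref{FFPtermeSource}), using the orthogonality relations (\ref{RelOrtho}) to peel off the part linear in $\tau$, and comparing with (\ref{FFPTotal}):
\begin{equation*}
F^\eps_{1,n}(\tau)+i\,F^\eps_{2,n}(\tau)=\eps^{-1}\int_{\mathcal{D}}\mu\,u^\eps_{\mrm{s}}\,e^{-ik\boldsymbol{\theta}_n\cdot\bfx}\,d\bfx ,
\end{equation*}
where $\mu=\mu(\tau)$ is the affine function of $\tau$ given by (\ref{ExpressMu}) and $u^\eps_{\mrm{s}}=u^\eps_{\mrm{s}}(\tau)$ is the associated scattered field. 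The dangerous feature is the prefactor $\eps^{-1}$; the whole argument hinges on showing that $u^\eps_{\mrm{s}}$ (and its increments in $\tau$) are of size $O(\eps)$, so that this $\eps^{-1}$ is absorbed.

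To get the required $O(\eps)$ control I would use the Lippmann--Schwinger formulation of (\ref{PbChampTotalFreeSpaceBis}). Let $\mathcal{G}$ denote the outgoing solution operator of the free Helmholtz equation $-\Delta w-k^2 w=f$ in $\R^d$; for the fixed real $k>0$ it is bounded from $\mrm{L}^2(\mathcal{D})$ (functions extended by zero) into $\mrm{L}^2(\mathcal{D})$, because $k$ is not a pole of the free resolvent. Writing $u^\eps_{\mrm{s}}=\eps k^2\,\mathcal{G}(\mu\,u_{\mrm{i}})+\eps k^2\,\mathcal{G}(\mu\,u^\eps_{\mrm{s}})$ and noting that on $\mathbb{B}_\gamma$ one has $\|\mu\|_{\mrm{L}^\infty}\le C_\gamma$ uniformly, the map $v\mapsto \eps k^2\mathcal{G}(\mu v)$ is a contraction of $\mrm{L}^2(\mathcal{D})$ as soon as $\eps k^2\|\mathcal{G}\|\,C_\gamma\le 1/2$. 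A Neumann series (or a direct absorption estimate) then gives $\|u^\eps_{\mrm{s}}\|_{\mrm{L}^2(\mathcal{D})}\le 2\eps k^2\|\mathcal{G}\|\,C_\gamma\,\|u_{\mrm{i}}\|_{\mrm{L}^2(\mathcal{D})}\le C\eps$, uniformly in $\tau\in\mathbb{B}_\gamma$. Inserting this into the displayed identity and applying Cauchy--Schwarz yields $|F^\eps_{j,n}(\tau)|\le \eps^{-1}C_\gamma\,\|u^\eps_{\mrm{s}}\|_{\mrm{L}^2(\mathcal{D})}\,\|e^{-ik\boldsymbol{\theta}_n\cdot\bfx}\|_{\mrm{L}^2(\mathcal{D})}\le C$, the desired uniform bound.

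For the Lipschitz estimate I would compare the two scattered fields. Since $\tau\mapsto\mu(\tau)$ is affine, $\|\mu(\tau)-\mu(\tau')\|_{\mrm{L}^\infty}\le C|\tau-\tau'|$. Subtracting the Lippmann--Schwinger equations for $\tau$ and $\tau'$ and setting $w:=u^\eps_{\mrm{s}}(\tau)-u^\eps_{\mrm{s}}(\tau')$ gives $w=\eps k^2\mathcal{G}\big((\mu-\mu')\,u^\eps(\tau)\big)+\eps k^2\mathcal{G}(\mu'\,w)$; absorbing the last term for small $\eps$ and using the uniform bound $\|u^\eps(\tau)\|_{\mrm{L}^2(\mathcal{D})}\le C$ (as $u^\eps=u_{\mrm{i}}+u^\eps_{\mrm{s}}$) yields $\|w\|_{\mrm{L}^2(\mathcal{D})}\le C\eps\,|\tau-\tau'|$. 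Decomposing $\mu\,u^\eps_{\mrm{s}}(\tau)-\mu'\,u^\eps_{\mrm{s}}(\tau')=(\mu-\mu')\,u^\eps_{\mrm{s}}(\tau)+\mu'\,w$ in the identity for $F^\eps_{j,n}$ and applying the two estimates, the $\eps^{-1}$ is again cancelled by the $O(\eps)$ bounds, giving $|F^\eps_{j,n}(\tau)-F^\eps_{j,n}(\tau')|\le C|\tau-\tau'|$ with $C$ independent of $\eps$. Combined with the first paragraph this closes the argument.

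The main obstacle is exactly the cancellation of the $\eps^{-1}$ prefactor: one must show not merely that $u^\eps_{\mrm{s}}$ is small but that it is small \emph{of the precise order $\eps$}, and likewise that its dependence on $\tau$ is Lipschitz with an $O(\eps)$ constant, all \emph{uniformly} over the ball $\mathbb{B}_\gamma$. This is why the boundedness of the free outgoing resolvent at the fixed real $k$ (no pole on $(0,+\infty)$) and the uniform bound $\|\mu\|_{\mrm{L}^\infty}\le C_\gamma$ on $\mathbb{B}_\gamma$ are the two essential ingredients; once they are secured, the contraction property is delivered by the explicit outer factor $\eps$.
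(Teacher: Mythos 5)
Your argument is correct, and its skeleton matches the paper's: both proofs reduce the claim to showing that the components $F^{\eps}_{j,n}$ are uniformly bounded and uniformly Lipschitz on $\mathbb{B}_\gamma$ with constants independent of $\eps$, and then let the explicit prefactor $\eps$ in (\ref{DefMapContra}) deliver both the self-mapping property and the contraction. The difference lies in how the key quantitative input is obtained. The paper writes $u_{\mrm{s}}^{\eps}=\eps\hat{u}+\eps^2\check{u}^{\eps}$, identifies $F^{\eps}_{1,n}+iF^{\eps}_{2,n}=\int_{\mathcal{D}}\mu(\hat{u}+\eps\check{u}^{\eps})e^{-ik\boldsymbol{\theta}_n\cdot\bfx}\,d\bfx$ (which is exactly your $\eps^{-1}\int_{\mathcal{D}}\mu\,u_{\mrm{s}}^{\eps}e^{-ik\boldsymbol{\theta}_n\cdot\bfx}\,d\bfx$), and then \emph{cites} general perturbation theory for linear operators (Kato, Hille--Phillips) to obtain the Lipschitz dependence of $\hat{u}$ and $\check{u}^{\eps}$ on $\tau$. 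You instead derive the needed $O(\eps)$ smallness of $u_{\mrm{s}}^{\eps}$ and its $O(\eps)\,|\tau-\tau'|$ increments directly from the Lippmann--Schwinger equation via a Neumann series, using only the boundedness of the free outgoing resolvent at the fixed real $k$ and the uniform bound $\|\mu\|_{\mrm{L}^{\infty}}\le C_\gamma$ on $\mathbb{B}_\gamma$. Your route is more self-contained and makes explicit exactly where the cancellation of the $\eps^{-1}$ prefactor comes from; as a bonus, the Neumann series also furnishes well-posedness of the perturbed scattering problem for small $\eps$ without appeal to the general theory. The paper's version is shorter and defers the analytic work to standard references. Both are complete proofs of the statement.
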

\begin{proof}
We assume that $\gamma>0$ is given. Decomposing $u_{\mrm{s}}^{\eps}$ as $u_{\mrm{s}}^{\eps}=0+\eps \hat{u} +\eps^2 \check{u}^{\eps}$, where $\hat{u}$ is defined in (\ref{ChampScaOrdres}), we obtain that $\check{u}^{\eps}$ must be the solution of the problem 
\begin{equation}\label{defTermRaccour}
\begin{array}{|rcll}
\multicolumn{4}{|l}{\mbox{Find }\check{u}^{\eps}\in\mH^1_{\loc}(\R^d)\mbox{ such that}}\\[4pt]
-\Delta \check{u}^{\eps}-k^2\,\rho\,\check{u}^{\eps} & = & k^2 \mu \hat{u} & \mbox{ in }\R^d,\\
\multicolumn{4}{|c}{\dsp\lim_{r\to +\infty} r^{\frac{d-1}{2}}\left( \frac{\partial \check{u}^{\eps}}{\partial r}-ik \check{u}^{\eps} \right)  = 0.}
\end{array}
\end{equation}
Therefore, using (\ref{FFPtermeSource}), we find that the functions $F^{\eps}_{1,n}$,  $F^{\eps}_{2,n}$ introduced in (\ref{FFPTotal}) verify
\begin{equation}\label{FFexpression1}
 F^{\eps}_{1,n}(\tau) = \Re e\left(\dsp \int_{\mathcal{D}}
\mu( \hat{u}+\eps \check{u}^{\eps})\,e^{-ik \boldsymbol{\theta}_n\cdot\boldsymbol{x}}\,d\bfx\right) \mbox{ and }\ F^{\eps}_{2,n}(\tau) = \Im m\left(\dsp \int_{\mathcal{D}} \mu\,( \hat{u}+\eps \check{u}^{\eps})\,e^{-ik \boldsymbol{\theta}_n\cdot\boldsymbol{x}}\,d\bfx\right).
\end{equation}
General results of perturbations theory for linear operators (see \cite[Chap. 7]{Kato66}, \cite[Chap. 4]{HiPh57}) yield, for $\eps$ small enough, the continuity estimate 
\begin{equation}\label{ContinuityParam}
\|\hat{u}-\hat{u}'\|_{\mathcal{D}}+\|\check{u}^{\eps}-\check{u}^{\eps}{}'\|_{\mathcal{D}} \le C\,|\tau-\tau'|,\qquad \forall\tau,\tau'\in\mathbb{B}_\gamma.
\end{equation}
Here, $C>0$ is a constant (which can change from one line to another) independent of $\eps$ while $\hat{u}'$, $\check{u}^{\eps}{}'$ denote respectively the solutions of (\ref{ChampScaOrdres}), (\ref{defTermRaccour}) with $\tau$ replaced by $\tau'$ in the definition of $\mu$ (\ref{ExpressMu}). Using (\ref{ContinuityParam}) in (\ref{FFexpression1}), we deduce $|F^{\eps}_{1,n}(\tau)-F^{\eps}_{1,n}(\tau')|+|F^{\eps}_{2,n}(\tau)-F^{\eps}_{2,n}(\tau')| \le C\,|\tau-\tau'|$. This result and definition (\ref{DefMapContra}) then imply
\begin{equation}\label{EstimContra}
|F^{\eps}(\tau)-F^{\eps}(\tau')|\le C\,\eps\,|\tau-\tau'|,\qquad \forall\tau,\tau'\in\mathbb{B}_\gamma.
\end{equation}
Taking $\tau'=0$ in (\ref{EstimContra}) and remarking that $|F^{\eps}(0)| \le C\,\eps$ (use (\ref{FFexpression1}) and the conditions (\ref{RelOrtho}) imposed on $\mu_0$ to show the latter inequality), we find $|F^{\eps}(\tau)| \le C\,\eps$ for all $\tau\in\mathbb{B}_\gamma$. With (\ref{EstimContra}), this finally allows to conclude that the map $F^{\eps}$ is a contraction of $\mathbb{B}_\gamma$ for $\eps$ small enough.
\end{proof}
\noindent Let us denote $\tau\,{}^{\mrm{sol}}\in\mathbb{B}_\gamma$ the unique solution of Problem (\ref{PbFixedPoint}). The end of the previous proof ensures that there exists a constant $c_0>0$ independent of $\eps$ such that 
\begin{equation}\label{RelAPos}
|\tau\,{}^{\mrm{sol}}| = |F^{\eps}(\tau\,{}^{\mrm{sol}})| \le c_0\,\eps,\qquad \forall\eps\in(0;\eps_0].
\end{equation}
Estimate (\ref{RelAPos}) tells us that the function $\mu$ is equal to $\mu_0$ at first order.

\subsection{Backscattering measurements}

We come back to the study of the relative scattering matrix $\mA(k)\in\Cplx^{N\times N}$ associated with (\ref{PbChampTotalFreeSpaceBis}). We remind the reader that in the definition of $\mA(k)$, we assumed that the scattering directions are $-\boldsymbol{\theta}_1,\dots,-\boldsymbol{\theta}_N$ where  $\boldsymbol{\theta}_1,\dots,\boldsymbol{\theta}_N$ denote the incident directions. We wish to construct an inclusion of material for which $\mA(k)=0_{N\times N}$. It is well-known (see \cite[Theorem 8.8]{CoKr13}) that the far field pattern introduced in (\ref{scatteredFieldFreeSpaceIntro}) satisfies the reciprocity relation  
\[
u_{\mrm{s}}^{\infty}(\boldsymbol{\theta}_{\mrm{s}},\boldsymbol{\theta}_{\mrm{i}}) = u_{\mrm{s}}^{\infty}(-\boldsymbol{\theta}_{\mrm{i}},-\boldsymbol{\theta}_{\mrm{s}}),\qquad\forall \boldsymbol{\theta}_{\mrm{s}},\boldsymbol{\theta}_{\mrm{i}}\in\mathbb{S}^{d-1}.
\]
From the definition of $\mA(k)$ (see (\ref{defRelScaMat})) and this property, we deduce that 
\[
\mA_{mn}(k)=u_{\mrm{s}}^{\infty}(-\boldsymbol{\theta}_m,\boldsymbol{\theta}_n) = u_{\mrm{s}}^{\infty}(-\boldsymbol{\theta}_n,\boldsymbol{\theta}_m)=\mA_{nm}(k).
\]
Therefore, the matrix $\mA(k)$ is symmetric and we need to cancel only $N(N+1)/2$ complex terms. Following the approach of the previous section, we search for $\rho$ under the form  $\rho=1+\eps\mu$ with 
\begin{equation}\label{ExpressMuBis}
\mu=\mu_0+\dsp\sum_{n=1}^N\sum_{m=1}^n \tau_{1,m,n}\,\mu_{1,m,n} + \dsp\sum_{n=1}^N\sum_{m=1}^n \tau_{2,m,n}\, \mu_{2,m,n}.
\end{equation}
In this expression, $\tau_{1,m,n}$, $\tau_{1,m,n}$ denote some real parameters to fix while $\mu_{0}$, $\mu_{1,m,n}$, $\mu_{2,m,n}$ refer to given real valued functions, supported in $\overline{\mathcal{D}}$, verifying
\begin{equation}\label{RelOrthoBis}
\begin{array}{lcl}
\dsp\int_{\mathcal{D}} \mu_{0}\cos(k(\boldsymbol{\theta}_{n'}+\boldsymbol{\theta}_{m'})\cdot\boldsymbol{x}))\,d\boldsymbol{x} &= & 0,\\[12pt]

\dsp\int_{\mathcal{D}} \mu_{0}\sin(k(\boldsymbol{\theta}_{n'}+\boldsymbol{\theta}_{m'})\cdot\boldsymbol{x}))\,d\boldsymbol{x} &= & 0,\\[12pt]

\dsp\int_{\mathcal{D}} \mu_{1,m,n}\cos(k(\boldsymbol{\theta}_{n'}+\boldsymbol{\theta}_{m'})\cdot\boldsymbol{x}))\,d\boldsymbol{x} &= & \begin{array}{|l}1 \mbox{ if }(m,n)=(m',n')\mbox{ or }(m,n)=(n',m')\\ 0 \mbox{ else,} \end{array}\\[12pt]

\dsp\int_{\mathcal{D}} \mu_{1,m,n}\sin(k(\boldsymbol{\theta}_{n'}+\boldsymbol{\theta}_{m'})\cdot\boldsymbol{x}))\,d\boldsymbol{x} &= & 0,\\[12pt]

\dsp\int_{\mathcal{D}} \mu_{2,m,n}\cos(k(\boldsymbol{\theta}_{n'}+\boldsymbol{\theta}_{m'})\cdot\boldsymbol{x}))\,d\boldsymbol{x} &= & 0,\\[12pt]

\dsp\int_{\mathcal{D}} \mu_{2,m,n}\sin(k(\boldsymbol{\theta}_{n'}+\boldsymbol{\theta}_{m'})\cdot\boldsymbol{x}))\,d\boldsymbol{x} &= & \begin{array}{|l}1 \mbox{ if }(m,n)=(m',n')\mbox{ or }(m,n)=(n',m')\\ 0 \mbox{ else.} \end{array}
\end{array}
\end{equation}
The existence of such functions $\mu_{0}$, $\mu_{1,m,n}$, $\mu_{2,m,n}$ can be shown working as in \S\ref{SubSectionNumExpe} provided that the $N(N+1)/2$ elements of the family $\{\boldsymbol{\theta}_{n'}+\boldsymbol{\theta}_{m'}\}_{1\le m'\le n'\le N}$ are all non null and all different. With such a $\mu$, using (\ref{FFPtermeSource}) and working as in (\ref{FFPTotal}), we obtain, for $m,n=1,\dots,N$,  the expansion
\begin{equation}\label{FFPTotalBis}
u_{\mrm{s}}^{\eps\,\infty}(-\boldsymbol{\theta}_m,\boldsymbol{\theta}_n) = \eps\,c_d\,k^2\,(\tau_{1,m,n}+i\tau_{2,m,n})+\eps^2\,c_d\,k^2\,(G^{\eps}_{1,m,n}(\tau)+iG^{\eps}_{2,m,n}(\tau)).
\end{equation}
Here, $G^{\eps}_{1,m,n}$, $G^{\eps}_{2,m,n}$ denote some functions, defined as in (\ref{FFexpression1}), of $\eps$ and $\tau:=(\tau_1,\tau_2)$, where $\tau_1=(\tau_{1,m,n})_{N\times N}$, $\tau_2=(\tau_{2,m,n})_{N\times N}$. According to (\ref{FFPTotalBis}), to impose $u_{\mrm{s}}^{\eps\,\infty}(-\boldsymbol{\theta}_m,\boldsymbol{\theta}_n)=0$ for $m,n=1,\dots,N$, it just remains to solve the problem \\
\begin{equation}\label{PbFixedPointBis}
\begin{array}{|l}
\mbox{Find }\tau\in S^{N}\times S^{N}\mbox{ such that }\tau = G^{\eps}(\tau),
\end{array}~\\[5pt]
\end{equation}
where $S^N$ denotes the space of symmetric matrices. In this expression, the map $\tau\mapsto G^{\eps}$ is defined via 
\begin{equation}\label{DefMapContraBis}
G^{\eps}(\tau):= -\eps\,(G^{\eps}_{1}(\tau),G^{\eps}_{2}(\tau)),
\end{equation}
where $G^{\eps}_{1}(\tau)$, $G^{\eps}_{2}(\tau)$ stand for the $N\times N$ symmetric matrices made of the terms $G^{\eps}_{1,m,n}(\tau)$, $G^{\eps}_{1,m,n}(\tau)$ respectively. A simple adaptation of the proof of Lemma \ref{lemmaTech} allows to demonstrate that for any $\gamma>0$, there exists $\eps_0>0$ such that for all $\eps\in(0;\eps_0]$, the map $G^{\eps}$ is a contraction of $\{\tau\in S^{N}\times S^{N}\,\big|\,|\tau| \le \gamma\}$. This final result leads to the following statement.
\begin{proposition}
Let $k>0$ be a given wavenumber and $\mathcal{D}$ refer to a given Lipschitz domain. Consider $\boldsymbol{\theta}_1,\dots,\boldsymbol{\theta}_N$ $N$ vectors of $\mathbb{S}^{d-1}$ such that the $N(N+1)/2$ elements of the family $\{\boldsymbol{\theta}_{n'}+\boldsymbol{\theta}_{m'}\}_{1\le m'\le n'\le N}$ are all non null and all different. Then, there exists a non trivial parameter $\rho$, with $\rho-1$ supported in $\overline{\mathcal{D}}$, such that the relative scattering matrix $\mA(k)$ associated with (\ref{PbChampTotalFreeSpaceBis}) verifies $\mA(k)=0_{N\times N}$. As a consequence, for such inclusions, for any incident field combination of the plane waves $e^{i k \boldsymbol{\theta}_1\cdot\boldsymbol{x}},\dots,e^{i k \boldsymbol{\theta}_N\cdot\boldsymbol{x}}$, the far field pattern of the scattered field vanishes in the directions $-\boldsymbol{\theta}_1,\dots,-\boldsymbol{\theta}_N$.
\end{proposition}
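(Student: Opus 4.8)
The plan is to reduce the $N^2$ scalar conditions to the single-incident-direction construction of \S\ref{paragraphInvisi} by first exploiting the symmetry of $\mA(k)$, and then to annihilate all the relevant far field coefficients simultaneously via a Banach fixed point argument. First I would invoke the reciprocity relation $u_{\mrm{s}}^{\infty}(\boldsymbol{\theta}_{\mrm{s}},\boldsymbol{\theta}_{\mrm{i}})=u_{\mrm{s}}^{\infty}(-\boldsymbol{\theta}_{\mrm{i}},-\boldsymbol{\theta}_{\mrm{s}})$ to deduce that $\mA(k)$ is symmetric, so that it is enough to cancel the $N(N+1)/2$ entries $\mA_{mn}(k)$ with $1\le m\le n\le N$. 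Accordingly I would search for $\rho=1+\eps\mu$ with $\mu$ of the form (\ref{ExpressMuBis}), parametrised by the real unknowns collected in the two symmetric matrices $\tau_1,\tau_2\in S^N$.

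Next I would carry out the same formal asymptotic expansion as in the one-direction case: writing $u_{\mrm{s}}^{\eps}=\eps\hat{u}+\eps^2\check{u}^{\eps}$ and using the integral representation (\ref{FFPtermeSource}), the entry $u_{\mrm{s}}^{\eps\,\infty}(-\boldsymbol{\theta}_m,\boldsymbol{\theta}_n)$ has a leading term proportional to $\dsp\int_{\mathcal{D}}\mu\,e^{ik(\boldsymbol{\theta}_m+\boldsymbol{\theta}_n)\cdot\bfx}\,d\bfx$. The phase is $\boldsymbol{\theta}_n-(-\boldsymbol{\theta}_m)=\boldsymbol{\theta}_m+\boldsymbol{\theta}_n$, which is exactly why the relevant family is now $\{\boldsymbol{\theta}_{n'}+\boldsymbol{\theta}_{m'}\}$ rather than $\{\boldsymbol{\theta}_{\mrm{i}}-\boldsymbol{\theta}_{n'}\}$. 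The crux of the construction is then to pick $\mu_{1,m,n}$ and $\mu_{2,m,n}$ as a \emph{biorthogonal family} for the real and imaginary parts of these exponentials, namely satisfying the moment conditions (\ref{RelOrthoBis}). This linear moment problem is solvable precisely because the hypothesis forces all the vectors $\boldsymbol{\theta}_{m'}+\boldsymbol{\theta}_{n'}$ to be nonzero and pairwise distinct, so that the functions $\cos(k(\boldsymbol{\theta}_{m'}+\boldsymbol{\theta}_{n'})\cdot\bfx)$ and $\sin(k(\boldsymbol{\theta}_{m'}+\boldsymbol{\theta}_{n'})\cdot\bfx)$ are linearly independent on $\mathcal{D}$ and one can invert a finite system (as in \S\ref{SubSectionNumExpe}). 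With this choice the orthogonality relations collapse the leading order to $\eps\,c_d\,k^2(\tau_{1,m,n}+i\tau_{2,m,n})$, giving the expansion (\ref{FFPTotalBis}).

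I would then recast $u_{\mrm{s}}^{\eps\,\infty}(-\boldsymbol{\theta}_m,\boldsymbol{\theta}_n)=0$ as the fixed point problem (\ref{PbFixedPointBis}), $\tau=G^{\eps}(\tau)$ on $S^N\times S^N$, with $G^{\eps}$ defined by (\ref{DefMapContraBis}). To conclude I would adapt the proof of Lemma \ref{lemmaTech} almost verbatim: perturbation theory for the resolvent yields the uniform-in-$\eps$ Lipschitz estimate on $\hat{u}$ and $\check{u}^{\eps}$ with respect to $\tau$, whence $|G^{\eps}(\tau)-G^{\eps}(\tau')|\le C\eps\,|\tau-\tau'|$; combined with $|G^{\eps}(0)|\le C\eps$, which follows from the orthogonality of $\mu_0$ in (\ref{RelOrthoBis}), this shows that $G^{\eps}$ is a contraction of the ball of radius $\gamma$ for $\eps$ small. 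The Banach fixed point theorem then produces $\tau^{\mrm{sol}}$ for which $\mA(k)=0_{N\times N}$. For nontriviality I would note that $\|\tau^{\mrm{sol}}\|\le c_0\eps$, as in (\ref{RelAPos}), so that $\mu$ is an $O(\eps)$ perturbation of $\mu_0$; since the orthogonality conditions make $\mu_0$ independent of the $\mu_{1,m,n},\mu_{2,m,n}$, choosing $\mu_0\not\equiv0$ guarantees $\mu\not\equiv0$, and the final assertion is immediate from $\mA(k)=0$ and linearity.

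\textbf{Main obstacle.} The decisive step is the contraction estimate, which rests on the uniform-in-$\eps$ well-posedness and the Lipschitz dependence of the scattering solution on the parameter $\tau$; this is where one genuinely needs the analytic perturbation theory for linear operators rather than elementary energy bounds. A secondary but essential subtlety, and the one actually governed by the geometric hypothesis, is the solvability of the moment problem (\ref{RelOrthoBis}): it fails as soon as two sums $\boldsymbol{\theta}_{m'}+\boldsymbol{\theta}_{n'}$ coincide or one of them vanishes, for then the trigonometric functions become dependent and one can no longer prescribe all $N(N+1)/2$ complex coefficients independently.
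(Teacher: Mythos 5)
Your proposal is correct and follows essentially the same route as the paper: reciprocity to reduce to the $N(N+1)/2$ upper-triangular entries of the symmetric matrix $\mA(k)$, the ansatz (\ref{ExpressMuBis}) with the biorthogonal moment conditions (\ref{RelOrthoBis}) made solvable by the hypothesis that the sums $\boldsymbol{\theta}_{m'}+\boldsymbol{\theta}_{n'}$ are non null and pairwise distinct, and the contraction/fixed-point argument for $G^{\eps}$ adapted from Lemma \ref{lemmaTech}, with non-triviality secured by taking $\mu_0\not\equiv 0$.
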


\begin{remark}
In dimension two, the assumptions of this proposition can be slightly simplified. Indeed, we can see that  if $\boldsymbol{\theta}_1,\dots,\boldsymbol{\theta}_N$ denote $N$ vectors of $\mathbb{S}^{1}$, then the elements of the family $\{\boldsymbol{\theta}_{n'}+\boldsymbol{\theta}_{m'}\}_{1\le m'\le n'\le N}$ are all different as soon as they are all non null. 
\end{remark}

\begin{remark}
We have focused our attention only on the case where the propagation of acoustic waves is governed by the equation $\Delta u + k^2 \rho\,u=0$. Let us mention that we can play similarly with the coefficients $A$ or $(A,\rho)$ to obtain the same kind of results when the acoustic field verifies $\div(A\nabla u) + k^2\,u=0$ or $\div(A\nabla u) + k^2\rho\,u=0$.
\end{remark}

\subsection{The case of the scattering direction coinciding with the incident direction}
In the previous constructions (see e.g. (\ref{RelOrtho})), we needed to assume that there holds  $\boldsymbol{\theta}_{\mrm{s}}\neq \boldsymbol{\theta}_{\mrm{i}}$ to control, in the imaginary part of $u_{\mrm{s}}^{\eps\,\infty}(\boldsymbol{\theta}_{\mrm{s}},\boldsymbol{\theta}_{\mrm{i}})$, the terms of orders $\eps^2$, $\eps^3$, $\dots$ by the term of order $\eps$. When $\boldsymbol{\theta}_s=\boldsymbol{\theta}_{\mrm{i}}$ (see Figure \ref{IllustScaPb}), the approach we proposed cannot be implemented. To cope with this problem, one could consider a complex valued parameter $\rho$ or an anisotropic material characterized by some matrix valued coefficient $A$. But imagine that we want to use non dissipative isotropic materials with $A=\mrm{Id}$ only. For a given wavenumber $k>0$, can we find a real valued parameter $\rho$ for the inclusion $\mathcal{D}$ such that the far field pattern associated with the incident plane wave $u_{\mrm{i}}:=e^{i k \boldsymbol{\theta}_{\mrm{i}}\cdot\boldsymbol{x}}$, vanishes in the direction $\boldsymbol{\theta}_{\mrm{i}}$? In the sequel, we shall assume there is only one scattering direction ($N=1$). 

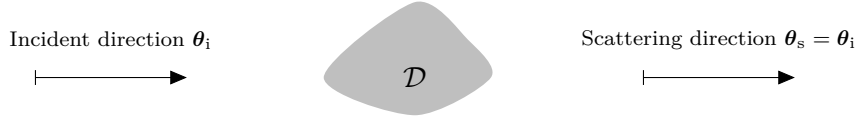
\begin{figure}[!ht]
\centering
\begin{tikzpicture}[scale=1]
\fill [black,line width=1pt,gray!50!white,draw=none] plot [smooth cycle] coordinates {(1,0)(0.7,0.5) (0,1) (-1.2,0) (0,-0.5)};
\node(a)at(0,0){$\mathcal{D}$};
\node(a)at(-4,0.5){{\scriptsize Incident direction $\boldsymbol{\theta}_{\mrm{i}}$}};
\node(a)at(4,0.5){{\scriptsize Scattering direction $\boldsymbol{\theta}_{\mrm{s}}=\boldsymbol{\theta}_{\mrm{i}}$}};
\draw[|-triangle 45] (-5,0) -- (-3,0);
\draw[|-triangle 45] (3,0) -- (5,0);
\end{tikzpicture}
\caption{Scattering problem with $\boldsymbol{\theta}_{\mrm{s}}=\boldsymbol{\theta}_{\mrm{i}}$.\label{IllustScaPb}}
\end{figure}

\noindent According to (\ref{FFPtermeSource}), we know that the far field pattern in the direction $\boldsymbol{\theta}_{\mrm{s}}$ of the scattered field solution of the problem
\begin{equation}\label{pbCla}
\begin{array}{|rcll}
-\Delta u & = & k^2 \rho\,u & \mbox{ in }\R^d,\\
u & = & u_{\mrm{i}}+u_{\mrm{s}} & \mbox{ in }\R^d,\\[4pt]
\multicolumn{4}{|c}{\dsp\lim_{r\to +\infty} r^{\frac{d-1}{2}}\left( \frac{\partial u_{\mrm{s}}}{\partial r}-ik u_{\mrm{s}} \right)  = 0,}
\end{array}
\end{equation}
is given by 
\begin{equation}\label{FormulaFFP}
u_{\mrm{s}}^{\infty}(\boldsymbol{\theta}_{\mrm{s}},\boldsymbol{\theta}_{\mrm{i}}) = c_d\,k^2\dsp \int_{\mathcal{D}} (\rho-1)\,u\,e^{-ik \boldsymbol{\theta}_{\mrm{s}}\cdot\bfx}\,d\bfx.
\end{equation}
When $\boldsymbol{\theta}_{\mrm{s}}=\boldsymbol{\theta}_{\mrm{i}}$, we find, using the relation $-\Delta u_{\mrm{s}}-k^2 \rho\,u_{\mrm{s}}= k^2\,(\rho-1)\,u_{\mrm{i}}$,
\begin{equation}\label{eqIntermFF}
\begin{array}{lcl}
u_{\mrm{s}}^{\infty}(\boldsymbol{\theta}_{\mrm{i}},\boldsymbol{\theta}_{\mrm{i}}) & = & c_d\,k^2\dsp \int_{\mathcal{D}} (\rho-1)\,(u_{\mrm{i}}+u_{\mrm{s}})\,\overline{u_{\mrm{i}}}\,d\bfx\\[10pt]
& = & c_d\,k^2\dsp \int_{\mathcal{D}} (\rho-1)\,|u_{\mrm{i}}|^2\,d\bfx-c_d\,\dsp \int_{\mathcal{D}} u_{\mrm{s}}\,\overline{\Delta u_{\mrm{s}}+k^2\rho\,u_{\mrm{s}}}\,d\bfx\\[10pt]
& = & c_d\,k^2\dsp \int_{\mathcal{D}} (\rho-1)\,|u_{\mrm{i}}|^2\,d\bfx+c_d\,\dsp \int_{\mathcal{D}} |\nabla u_{\mrm{s}}|^2-k^2\rho |u_{\mrm{s}}|^2\,d\bfx+c_d\,
\langle\overline{\partial_{\bfnu}u_{\mrm{s}}},u_{\mrm{s}}\rangle_{\Gamma}.
\end{array}
\end{equation}
Let us work on the last term of the right hand side of the previous equation as in the proof of \cite[Theorem 2.5]{CoKr13}. We take $R$ large enough such that $\overline{\mathcal{D}}$ is contained in the ball $\mrm{B}_R:=\{\bfx\in\R^d\,\big|\,|\bfx| < R\}$ and apply Green formula in $\mrm{B}_R\setminus\overline{\mathcal{D}}$ to obtain
\[
\langle\overline{\partial_{\bfnu}u_{\mrm{s}}},u_{\mrm{s}}\rangle_{\Gamma} = -\langle\overline{\partial_{\bfnu}u_{\mrm{s}}},u_{\mrm{s}}\rangle_{\partial\mrm{B}_R}+\dsp \int_{\mrm{B}_R\setminus\overline{\mathcal{D}}} |\nabla u_{\mrm{s}}|^2-k^2 |u_{\mrm{s}}|^2\,d\bfx.
\]
On $\partial\mrm{B}_R$, $\bfnu$ refers to the unit outward normal to $\mrm{B}_R$. But $u_{\mrm{s}}$ satisfy the radiation condition of (\ref{pbCla}) and admits the decomposition (\ref{scatteredFieldFreeSpaceIntro}). Therefore, there holds 
\begin{equation}\label{eqFF}
\Im m\,\langle\overline{\partial_{\bfnu}u_{\mrm{s}}},u_{\mrm{s}}\rangle_{\Gamma} = k \int_{\mathbb{S}^{d-1}} |u_{\mrm{s}}^{\infty}(\boldsymbol{\theta},\boldsymbol{\theta}_{\mrm{i}})|^2\,d\boldsymbol{\theta}.
\end{equation}
Using (\ref{eqFF}) in (\ref{eqIntermFF}), we conclude
\[
\Im m\,(c_d^{-1}\,u_{\mrm{s}}^{\infty}(\boldsymbol{\theta}_{\mrm{i}},\boldsymbol{\theta}_{\mrm{i}})) = k \int_{\mathbb{S}^{d-1}} |u_{\mrm{s}}^{\infty}(\boldsymbol{\theta},\boldsymbol{\theta}_{\mrm{i}})|^2\,d\boldsymbol{\theta}.
\]
According to the Rellich's lemma (see, e.g., \cite[Theorem 4.1]{CaCo06} in dimension 2 and \cite[Theorem 2.14]{CoKr13} in dimension 3), this proves the
\begin{proposition}\label{PropoFF}
Let $u_{\mrm{s}}$ refer to the scattered field, defined by Problem (\ref{pbCla}), associated with the incident plane wave $u_{\mrm{i}}=e^{i k \boldsymbol{\theta}_{\mrm{i}}\cdot\boldsymbol{x}}$. Then the far field pattern of $u_{\mrm{s}}$ vanishes in the direction $\boldsymbol{\theta}_{\mrm{i}}$ if and only if there holds $u_{\mrm{s}}=0$ in $\R^d\setminus\overline{\mathcal{D}}$.
\end{proposition}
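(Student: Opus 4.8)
The plan is to obtain the statement as an immediate consequence of the optical-theorem identity
\[
\Im m\,(c_d^{-1}\,u_{\mrm{s}}^{\infty}(\boldsymbol{\theta}_{\mrm{i}},\boldsymbol{\theta}_{\mrm{i}})) = k \int_{\mathbb{S}^{d-1}} |u_{\mrm{s}}^{\infty}(\boldsymbol{\theta},\boldsymbol{\theta}_{\mrm{i}})|^2\,d\boldsymbol{\theta},
\]
derived just above, combined with Rellich's lemma. The heart of the matter is already in place: the identity ties the single forward-scattering amplitude $u_{\mrm{s}}^{\infty}(\boldsymbol{\theta}_{\mrm{i}},\boldsymbol{\theta}_{\mrm{i}})$ to the $\mrm{L}^2(\mathbb{S}^{d-1})$-norm of the entire far field pattern, so that the vanishing of the former is bound to force the vanishing of the latter.

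The easy implication is the one starting from $u_{\mrm{s}}=0$: if $u_{\mrm{s}}=0$ in $\Om=\R^d\setminus\overline{\mathcal{D}}$, then by the expansion (\ref{scatteredFieldFreeSpaceIntro}) the far field pattern $u_{\mrm{s}}^{\infty}(\cdot,\boldsymbol{\theta}_{\mrm{i}})$, which is read off from the behaviour of $u_{\mrm{s}}$ at infinity, is identically zero, and in particular it vanishes in the direction $\boldsymbol{\theta}_{\mrm{i}}$. I would dispatch this in a single line.

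For the main implication, I would argue as follows. Suppose the far field pattern vanishes in the direction $\boldsymbol{\theta}_{\mrm{i}}$, that is $u_{\mrm{s}}^{\infty}(\boldsymbol{\theta}_{\mrm{i}},\boldsymbol{\theta}_{\mrm{i}})=0$. Then the left-hand side of the identity vanishes, hence so does its right-hand side. Since $k>0$ and the integrand is a nonnegative, continuous function of $\boldsymbol{\theta}$, this yields $u_{\mrm{s}}^{\infty}(\boldsymbol{\theta},\boldsymbol{\theta}_{\mrm{i}})=0$ for every $\boldsymbol{\theta}\in\mathbb{S}^{d-1}$: the vanishing of a single scattering coefficient propagates to the whole far field pattern. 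Now, in $\Om$ the parameter satisfies $\rho=1$, so $u_{\mrm{s}}$ solves the homogeneous Helmholtz equation $\Delta u_{\mrm{s}}+k^2u_{\mrm{s}}=0$ and obeys the Sommerfeld radiation condition while having a null far field pattern; Rellich's lemma then gives $u_{\mrm{s}}=0$ throughout the unbounded region $\Om$, which is the desired conclusion.

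I do not expect a genuine obstacle here, precisely because the nontrivial analytic content has been absorbed into the preliminary derivation of the identity (Green's formula on $\mrm{B}_R\setminus\overline{\mathcal{D}}$ together with the radiation condition). The only points deserving a word of care are the passage from $\int_{\mathbb{S}^{d-1}}|u_{\mrm{s}}^{\infty}(\boldsymbol{\theta},\boldsymbol{\theta}_{\mrm{i}})|^2\,d\boldsymbol{\theta}=0$ to $u_{\mrm{s}}^{\infty}(\cdot,\boldsymbol{\theta}_{\mrm{i}})\equiv0$, which relies on the continuity of the far field pattern, and the invocation of Rellich's lemma in the appropriate space dimension, for which the references cited in the statement are sufficient.
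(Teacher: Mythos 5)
Your proof is correct and follows exactly the paper's own route: the optical-theorem identity $\Im m\,(c_d^{-1}u_{\mrm{s}}^{\infty}(\boldsymbol{\theta}_{\mrm{i}},\boldsymbol{\theta}_{\mrm{i}})) = k\int_{\mathbb{S}^{d-1}}|u_{\mrm{s}}^{\infty}(\boldsymbol{\theta},\boldsymbol{\theta}_{\mrm{i}})|^2\,d\boldsymbol{\theta}$ forces the whole far field pattern to vanish, and Rellich's lemma then yields $u_{\mrm{s}}=0$ in $\R^d\setminus\overline{\mathcal{D}}$, the converse being immediate. This is precisely the argument the paper gives just before the proposition.
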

\noindent This shows that it is much more complicated to impose far field invisibility in the direction $\boldsymbol{\theta}_{\mrm{i}}$ than in other directions. In particular, if $k>0$ and $\rho$ are such that the the far field pattern of $u_{\mrm{s}}$ vanishes in the direction $\boldsymbol{\theta}_{\mrm{i}}$, according to Proposition \ref{PropoFF}, we must have $u_{\mrm{s}}=u-u_{\mrm{i}}=0$ and $\partial_{\boldsymbol{\nu}}u_{\mrm{s}}=\partial_{\boldsymbol{\nu}}(u-u_{\mrm{i}})=0$ on $\Gamma$. This implies that the pair $(u,u_{\mrm{i}})=(u_{\mrm{i}}+u_{\mrm{s}},u_{\mrm{i}})$ verify
\begin{equation}\label{pb trans restrictif}
\begin{array}{|lcll}
\Delta u +k^2 \rho\,u &=&0&\mbox{ in }\mathcal{D},\\
\Delta u_{\mrm{i}}+k^2 u_{\mrm{i}}&=&0&\mbox{ in }\mathcal{D},\\
u-u_{\mrm{i}} &=&0&\mbox{ on }\Gamma,\\
\partial_{\boldsymbol{\nu}}( u-u_{\mrm{i}})&=&0&\mbox{ on }\Gamma.
\end{array}
\end{equation}
This is nothing else than a transmission eigenvalue problem with a strong hypothesis on $u_{\mrm{i}}$. Compared to the classical transmission eigenvalue problem where the restriction of the incident field to $\mathcal{D}$ belongs to $\mrm{L}^2(\mathcal{D})$, here we additionally impose $u_{\mrm{i}}=e^{i k \boldsymbol{\theta}_{\mrm{i}}\cdot\boldsymbol{x}}$. As a consequence, for a given $\rho$ which verifies $0<c\le \rho \le C<1$ in a neighbourhood of $\Gamma$ or $1<c\le \rho$ in a neighbourhood of $\Gamma$, we know according to \cite{Sylv12} (see also \cite{LaVa13,Robb13} in the case of a smooth $\rho$) that the set of wavenumbers for which (\ref{pb trans restrictif}) has a non trivial solution is either empty or discrete. However, we cannot use the results of existence of transmission eigenvalues (see \cite{PaSy08,CaGH10}) to conclude to the existence of non trivial solutions to (\ref{pb trans restrictif}) because here, we impose a very restrictive condition for $u_{\mrm{i}}$. More interesting for our configuration are the recent results of \cite{BlPS14,PaSV14,ElHu15}. 
In these articles, situations where the support of $\rho-1$ has corners or edges are considered. In this context, the authors provide sufficient criteria on $\rho$ which ensure that all non trivial incident Herglotz functions (\textit{i.e.} all non trivial generalized combinations of plane waves), for all wavenumbers $k>0$, produce non trivial scattered field. For such coefficients $\rho$, the only solution of $(\ref{pb trans restrictif})$ is null. For more general $\rho$, for example such that the support of $\rho-1$ is smooth, even assuming $u_{\mrm{i}}=e^{i k \boldsymbol{\theta}_{\mrm{i}}\cdot\boldsymbol{x}}$, we do not know any result showing that $(\ref{pb trans restrictif})$ has only the trivial solution. Nevertheless, in any case, and this is the important message of this discussion, it seems particularly delicate to impose far field invisibility in the direction $\boldsymbol{\theta}_{\mrm{i}}$.

\subsection{Numerical experiments}\label{SubSectionNumExpe}
We want to implement numerically the approach developed in \S\ref{paragraphInvisi}. For a given wavenumber $k>0$, we consider the 2D scattering problem 
\begin{equation}\label{PbChampTotalFreeSpaceIntrodNum}
\begin{array}{|rcll}
\multicolumn{4}{|l}{\mbox{Find }u\in\mH^1_{\loc}(\R^2)\mbox{ such that}}\\[4pt]
-\Delta u & = & k^2 \rho\,u & \mbox{ in }\R^2,\\
u & = & u_{\mrm{i}}+u_{\mrm{s}} & \mbox{ in }\R^2,\\[4pt]
\multicolumn{4}{|c}{\dsp\lim_{r\to +\infty} \sqrt{r}\left( \frac{\partial u_{\mrm{s}}}{\partial r}-ik u_{\mrm{s}} \right)  = 0,}
\end{array}
\end{equation}
with $u_{\mrm{i}}=e^{i k \boldsymbol{\theta}_{\mrm{i}}\cdot\boldsymbol{x}}$ (one incident direction $\boldsymbol{\theta}_{\mrm{i}}=(\cos(\psi_{\mrm{i}}),\,\sin(\psi_{\mrm{i}}))$). Our goal is to build $\rho$ such that the far field pattern of $u_{\mrm{s}}$ vanishes in the three scattering directions $\boldsymbol{\theta}_{1}=(\cos(\psi_{1}),\,\sin(\psi_{1}))$, $\boldsymbol{\theta}_{2}=(\cos(\psi_{2}),\,\sin(\psi_{2}))$ and $\boldsymbol{\theta}_{3}=(\cos(\psi_{3}),\,\sin(\psi_{3}))$. In other words, we take $N=3$. Consider $\mathcal{D}$ a given Lipschitz domain. Following (\ref{ExpressMu}), we search for $\rho$ of the form $\rho=1+\eps\mu$, where $\mu$ is a function supported in $\overline{\mathcal{D}}$ such that
\begin{equation}\label{vecTau}
\mu=\mu_0+\dsp\sum_{n=1}^3 \tau_{1,n}\,\mu_{1,n} + \dsp\sum_{n=1}^3 \tau_{2,n}\, \mu_{2,n}.
\end{equation}
To define functions $\mu_{1,n}$, $\mu_{2,n}$ that satisfy the conditions of (\ref{RelOrtho}), we start by computing the matrix
\[
\mathbb{B} = \left(\begin{array}{cc}
\mathbb{B}^{11} & \mathbb{B}^{12}\\[10pt]
\mathbb{B}^{21} & \mathbb{B}^{22}\end{array}\right)
\]
where $\mathbb{B}^{21}=(\mathbb{B}^{12})^{\top}$ and 
\[
\begin{array}{|lclcl}
\mathbb{B}^{11} & = & (\mathbb{B}^{11}_{n,n'})_{3\times3} & \mbox{ with } & \mathbb{B}^{11}_{n,n'} =\dsp\int_{\mathcal{D}}\cos(k(\boldsymbol{\theta}_{\mrm{i}}-\boldsymbol{\theta}_{n})\cdot\boldsymbol{x})\,\cos(k(\boldsymbol{\theta}_{\mrm{i}}-\boldsymbol{\theta}_{n'})\cdot\boldsymbol{x})\,\,d\boldsymbol{x}\\[15pt]
\mathbb{B}^{12} & = & (\mathbb{B}^{12}_{n,n'})_{3\times3} & \mbox{ with } & \mathbb{B}^{12}_{n,n'} =\dsp\int_{\mathcal{D}}\sin(k(\boldsymbol{\theta}_{\mrm{i}}-\boldsymbol{\theta}_{n})\cdot\boldsymbol{x})\,\cos(k(\boldsymbol{\theta}_{\mrm{i}}-\boldsymbol{\theta}_{n'})\cdot\boldsymbol{x})\,\,d\boldsymbol{x}\\[15pt]
\mathbb{B}^{22} & = & (\mathbb{B}^{22}_{n,n'})_{3\times3} & \mbox{ with } & \mathbb{B}^{22}_{n,n'} =\dsp\int_{\mathcal{D}}\sin(k(\boldsymbol{\theta}_{\mrm{i}}-\boldsymbol{\theta}_{n})\cdot\boldsymbol{x})\,\sin(k(\boldsymbol{\theta}_{\mrm{i}}-\boldsymbol{\theta}_{n'})\cdot\boldsymbol{x})\,\,d\boldsymbol{x}.
\end{array}
\]
Observing that $\{\cos(k(\boldsymbol{\theta}_{\mrm{i}}-\boldsymbol{\theta}_{1})\cdot\boldsymbol{x})),\dots,\cos(k(\boldsymbol{\theta}_{\mrm{i}}-\boldsymbol{\theta}_{3})\cdot\boldsymbol{x})),\sin(k(\boldsymbol{\theta}_{\mrm{i}}-\boldsymbol{\theta}_{1})\cdot\boldsymbol{x})),\dots,\sin(k(\boldsymbol{\theta}_{\mrm{i}}-\boldsymbol{\theta}_{3})\cdot\boldsymbol{x}))\}$ is a family of linearly independent functions on $\mathcal{D}$ (when there holds $\boldsymbol{\theta}_{\mrm{i}}\ne\boldsymbol{\theta}_{n}$ for $n=1\dots 3$), we can prove that $\mathbb{B}$ is invertible. We denote $\mathbb{D}$ its inverse. Finally, we take 
\[
\begin{array}{|lcl}
\mu_{1,n} & = & \dsp\sum_{n'=1}^3 \mathbb{D}_{n',n}\,\cos(k(\boldsymbol{\theta}_{\mrm{i}}-\boldsymbol{\theta}_{n'})\cdot\boldsymbol{x})+\dsp\sum_{n'=1}^3 \mathbb{D}_{3+n',n}\,\sin(k(\boldsymbol{\theta}_{\mrm{i}}-\boldsymbol{\theta}_{n'})\cdot\boldsymbol{x})\\[10pt]
\mu_{2,n} & = & \dsp\sum_{n'=1}^3 \mathbb{D}_{n',3+n}\,\cos(k(\boldsymbol{\theta}_{\mrm{i}}-\boldsymbol{\theta}_{n'})\cdot\boldsymbol{x})+\dsp\sum_{n'=1}^3 \mathbb{D}_{3+n',3+n}\,\sin(k(\boldsymbol{\theta}_{\mrm{i}}-\boldsymbol{\theta}_{n'})\cdot\boldsymbol{x}).
\end{array}
\]
Then, we construct a $\mu_0$ that verifies the six orthogonality conditions of (\ref{RelOrtho}) taking 
\begin{equation}\label{defMu0}
\mu_0 = \mu^{\#}_0 - \dsp\sum_{n=1}^3 \left(\dsp\int_{\mathcal{D}}\mu_{1,n}\,\mu^{\#}_0\,d\boldsymbol{x}\right)\,\mu_{1,n} - \dsp\sum_{n=1}^3 \left(\dsp\int_{\mathcal{D}}\mu_{2,n}\,\mu^{\#}_0\,d\boldsymbol{x}\right)\, \mu_{2,n}
\end{equation}
where $\mu^{\#}_0$ is an arbitrary function such that $\mu^{\#}_0\notin\mrm{span}\{\mu_{1,1},\dots,\mu_{1,3},\mu_{2,1},\dots,\mu_{2,3}\}$. Here, we choose $\mu^{\#}_0$ such that $\mu^{\#}_0(\boldsymbol{x})=1+x+y$.\\
\newline
For the numerical experiments, we take $\psi_{\mrm{i}}=0^{\circ}$, $\psi_{1}=90^{\circ}$, $\psi_{2}=180^{\circ}$ (backscattering direction), $\psi_{3}=225^{\circ}$ and $\mathcal{D}=\mrm{B}_1$ (so that the inclusion in contained in the unit disk). The wavenumber $k$ is set to $k=4$. Now, we describe the procedure to solve the fixed point problem (\ref{PbFixedPoint}) by induction.\\
\newline
We denote $\tau^{j}=(\tau^{j}_{1,1},\dots,\tau^{j}_{1,3},\tau^{j}_{2,1},\dots,\tau^{j}_{2,3})^{\top}$ (resp. $\mu^j$) the value of $\tau=(\tau_{1,1},\dots,\tau_{1,3},\tau_{2,1},\dots,\tau_{2,3})^{\top}$ (resp. $\mu$) at iteration $j\ge0$ (we remind the reader that $\tau_{1,n}$, $\tau_{2,n}$ are the parameters appearing in (\ref{vecTau})). Using formulas (\ref{FFPTotal}), (\ref{PbFixedPoint}), for $j\ge0$, $n=1,\dots,3$, we define
\begin{equation}\label{EcriturePtFixe}
\begin{array}{|lcl}
\tau^{j+1}_{1,n} = \tau^{j}_{1,n} - \Re e\,(\,(\eps\,c_d\,k^2)^{-1}\,u_{\mrm{s}}^{\eps\,j\,\infty}(\boldsymbol{\theta}_n,\boldsymbol{\theta}_{\mrm{i}})\,)\\[12pt]
\tau^{j+1}_{2,n} = \tau^{j}_{2,n} - \Im m\,(\,(\eps\,c_d\,k^2)^{-1}\,u_{\mrm{s}}^{\eps\,j\,\infty}(\boldsymbol{\theta}_n,\boldsymbol{\theta}_{\mrm{i}})\,).
\end{array}
\end{equation}
In the above definition, $u_{\mrm{s}}^{\eps\,j\,\infty}(\boldsymbol{\theta}_n,\boldsymbol{\theta}_{\mrm{i}})$ denotes the far field pattern in the direction $\boldsymbol{\theta}_n$ of the function $u_{\mrm{s}}^{\eps\,j}$ satisfying the problem 
\begin{equation}\label{PbChampScaNum}
\begin{array}{|rcll}
\multicolumn{4}{|l}{\mbox{Find }u_{\mrm{s}}^{\eps\,j}\in\mH^1_{\loc}(\R^2)\mbox{ such that}}\\[4pt]
-\Delta u_{\mrm{s}}^{\eps\,j}-k^2 (1+\eps\,\mu^j)\,u_{\mrm{s}}^{\eps\,j} & = & k^2\,\eps\,\mu^j\,u_{\mrm{i}}& \mbox{ in }\R^2,\\[4pt]
\multicolumn{4}{|c}{\dsp\lim_{r\to +\infty} \sqrt{r}\left( \frac{\partial u_{\mrm{s}}^{\eps\,j}}{\partial r}-ik u_{\mrm{s}}^{\eps\,j} \right)  = 0.}
\end{array}
\end{equation}
According to formula (\ref{FormulaFFP}), we know that 
\begin{equation}\label{DefTermeChampLointain}
u_{\mrm{s}}^{\eps\,j\,\infty}(\boldsymbol{\theta}_n,\boldsymbol{\theta}_{\mrm{i}}) = c_d\,k^2\dsp \int_{\mathcal{D}} \eps\,\mu^j\,(u_{\mrm{i}}+u_{\mrm{s}}^{\eps\,j})\,e^{-ik \boldsymbol{\theta}_n}\,d\bfx.
\end{equation}
We approximate the solution of Problem (\ref{PbChampScaNum}) with a P2 finite element method set on the ball $\mrm{B}_8$ ($8$ is the radius). On $\partial\mrm{B}_8$, a truncated Dirichlet-to-Neumann map with 13 harmonics serves as a transparent boundary condition. We choose $\tau^{0}=(0,\dots,0)$. For the simulations of Figures \ref{figResult1}--\ref{figResult4}, we stop the procedure when $\sum_{n=1}^3|\tau^{j+1}_{1,n}- \tau^{j}_{1,n}|+|\tau^{j+1}_{2,n}- \tau^{j}_{2,n}|\le 10^{-13}$ (corresponding to 37 iterations) and we take $\eps=0.15$. To obtain the results of Figures \ref{figResult5}--\ref{ValeursInclus}, we perform 10 iterations and we try several values of $\eps$. For the computations, we use the \textit{FreeFem++}\footnote{\textit{FreeFem++}, \url{http://www.freefem.org/ff++/}.} software while we display the results with \textit{Matlab}\footnote{\textit{Matlab}, \url{http://www.mathworks.se/}.} and \textit{Paraview}\footnote{\textit{Paraview}, \url{http://www.paraview.org/}.}.

\begin{figure}[!ht]
\centering
\includegraphics[scale=0.11]{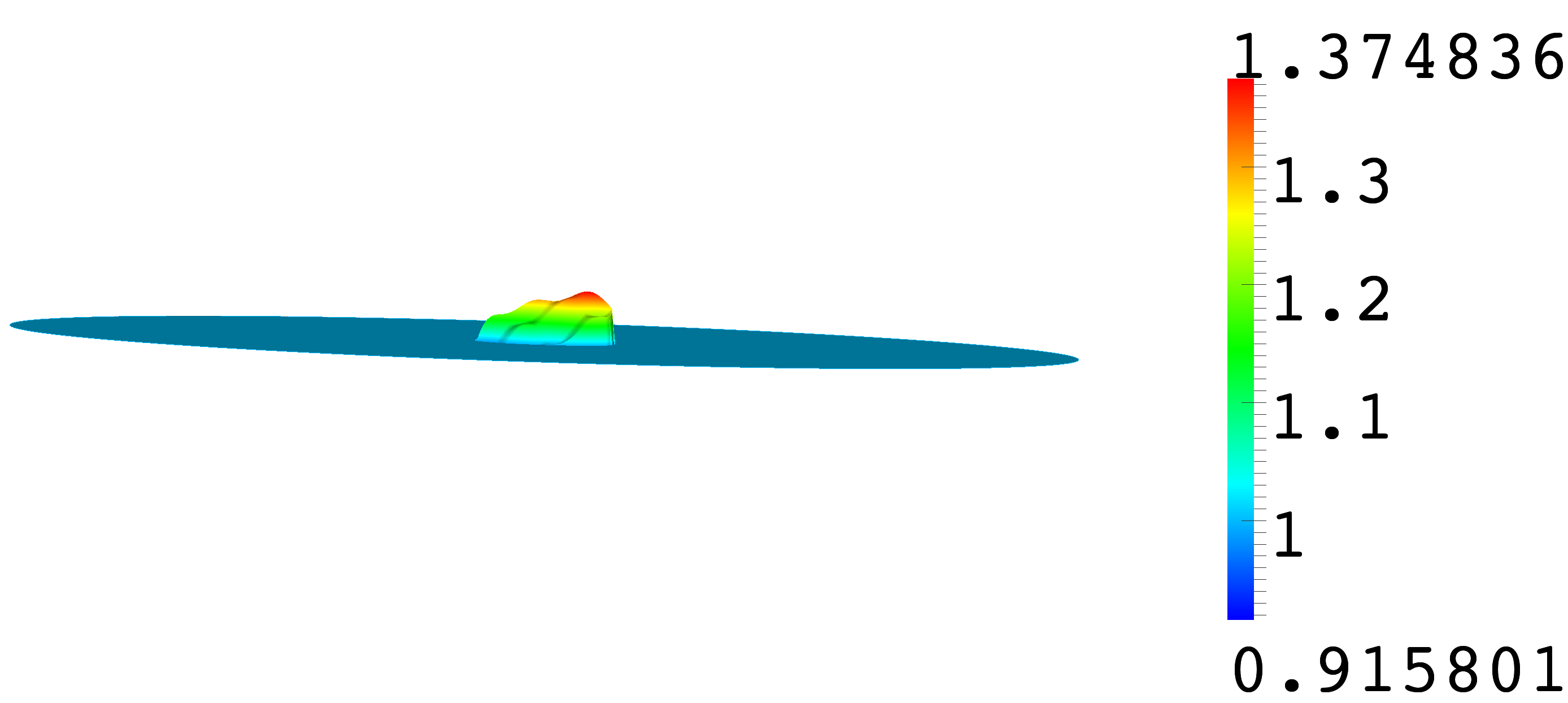}
\caption{Coefficient $\rho$. Interestingly, the fixed point procedure converges though the coefficient $\rho$ is not a very small perturbation of the parameter (uniformly equal to one) of the reference medium. The domain represented here is equal to $\mrm{B}_8$ and the inclusion $\mathcal{D}$ is located in $\mrm{B}_1$. \label{figResult1}}
\end{figure}

\begin{figure}[!ht]
\centering
\includegraphics[scale=0.11]{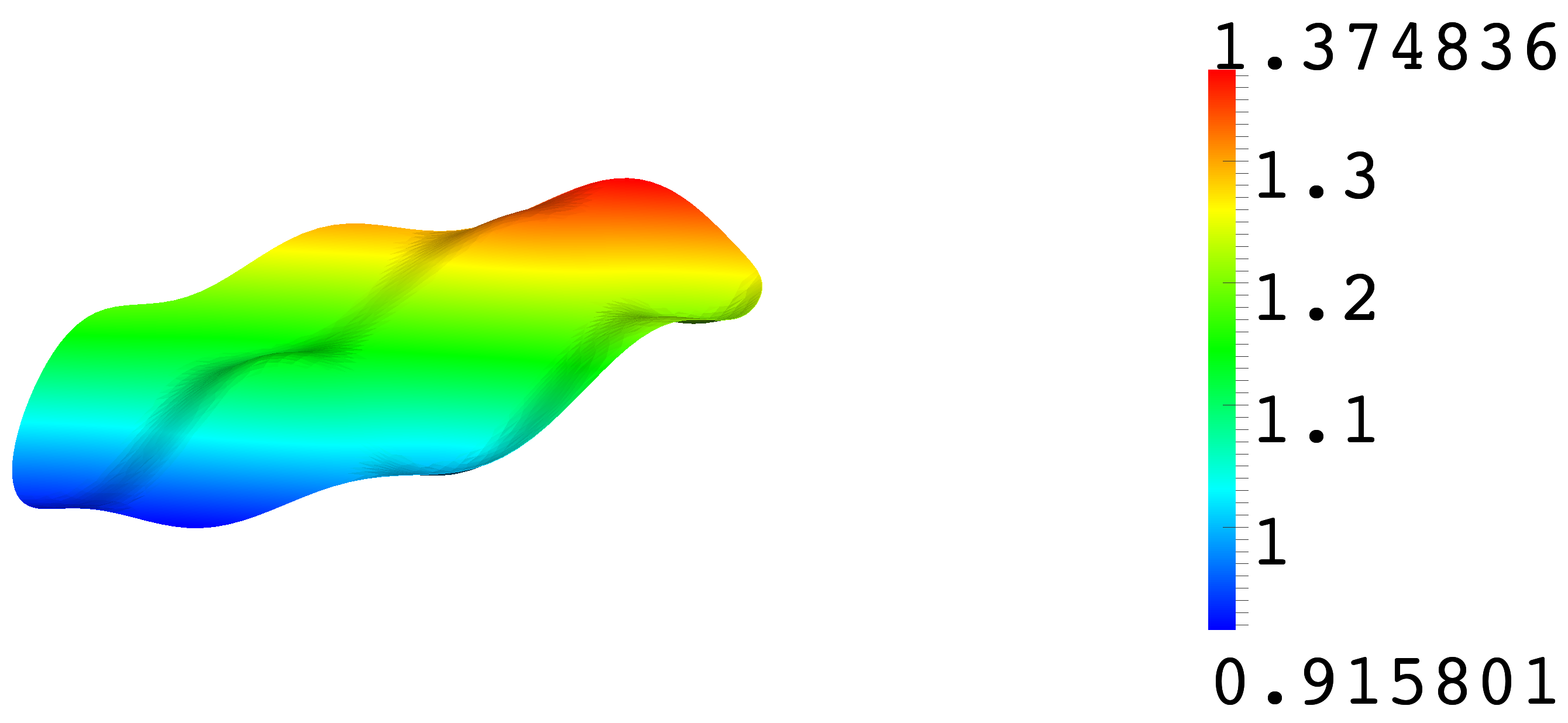}
\caption{Restriction of the coefficient $\rho$ to $\mathcal{D}$. In this particular case, with our choice for the functions defining $\mu$, we have $\rho|_{\overline{\mathcal{D}}}\in\mathscr{C}^{\infty}(\overline{\mathcal{D}})$. The domain represented here is $\mrm{B}_1$. \label{figResult2}
}
\end{figure}

\begin{figure}[!ht]
\centering
\includegraphics[scale=0.11]{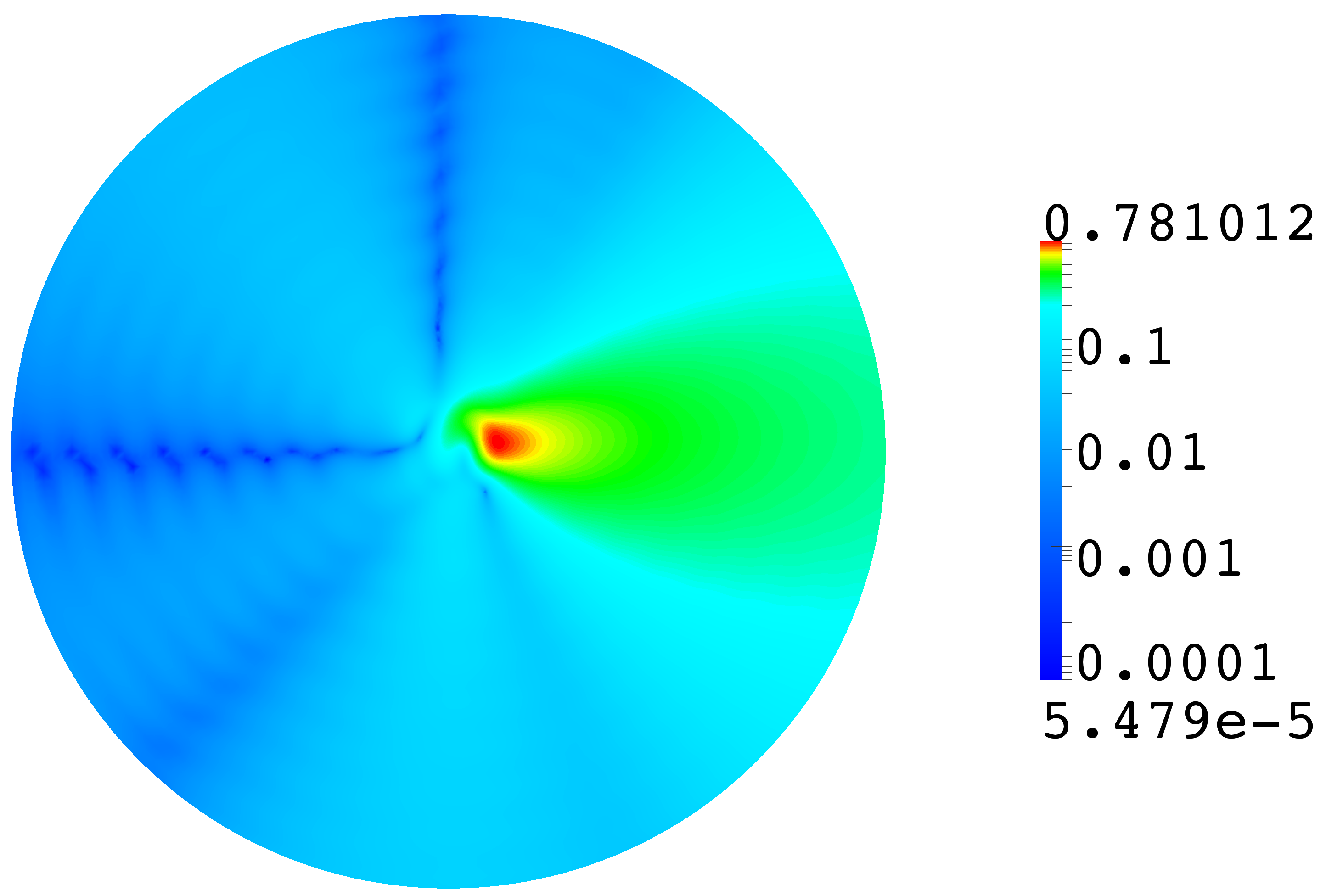}
\caption{Absolute value of the approximation of the scattered field $u_{\mrm{s}}^{\eps}$ at the end of the fixed point procedure in logarithmic scale. As desired, we see it is very small far from $\mathcal{D}$ in the directions corresponding to the angles $90^{\circ}$, $180^{\circ}$ and $225^{\circ}$. The domain represented here is equal to $\mrm{B}_8$ and the inclusion $\mathcal{D}$ is located in $\mrm{B}_1$.\label{figResult3}}
\end{figure}

\begin{figure}[!ht]
\centering
\includegraphics[scale=0.9]{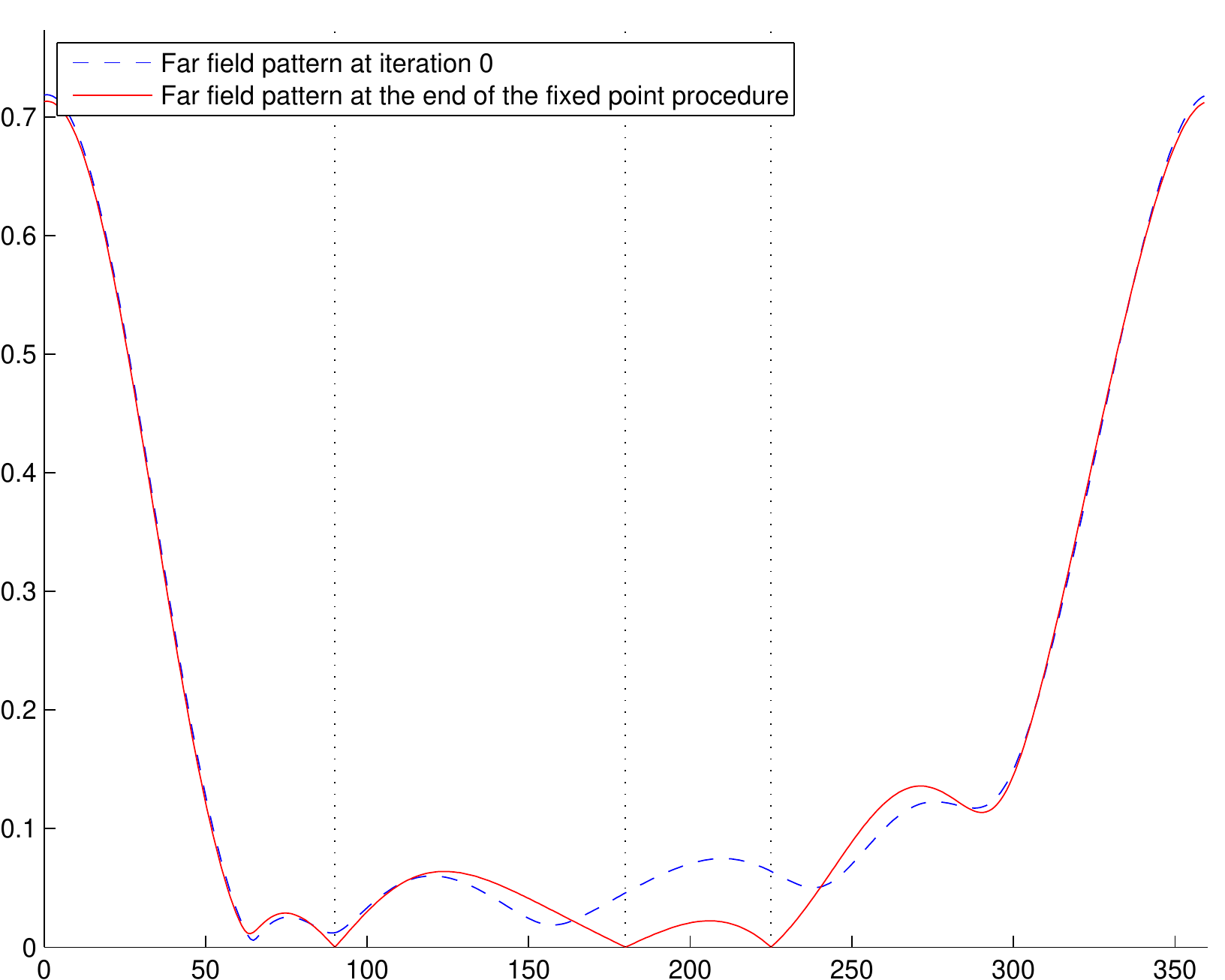}
\caption{The dashed curve represents the approximated far field pattern at iteration 0 (taking $\rho=1+\eps\mu^0=1+\eps\mu_0$). The solid curve corresponds to the approximated far field pattern computed for the parameter $\rho$ obtained at the end of the fixed point procedure. The dotted lines indicate the directions where we want $u_{\mrm{s}}^{\eps\,\infty}$ to vanish. From these results, and in accordance with estimate (\ref{RelAPos}), we infer that the term $\mu_0$ in the expression of $\mu$ (see (\ref{vecTau})) determines the shape of $u_{\mrm{s}}^{\eps\,\infty}$. But, we also notice that the fixed point procedure allows to correct significantly $u_{\mrm{s}}^{\eps\,\infty}$ in the directions $\boldsymbol{\theta}_1$, $\boldsymbol{\theta}_2$ and $\boldsymbol{\theta}_3$.\label{figResult4}}
\end{figure}

\begin{figure}[!ht]

\centering
\includegraphics[scale=0.9]{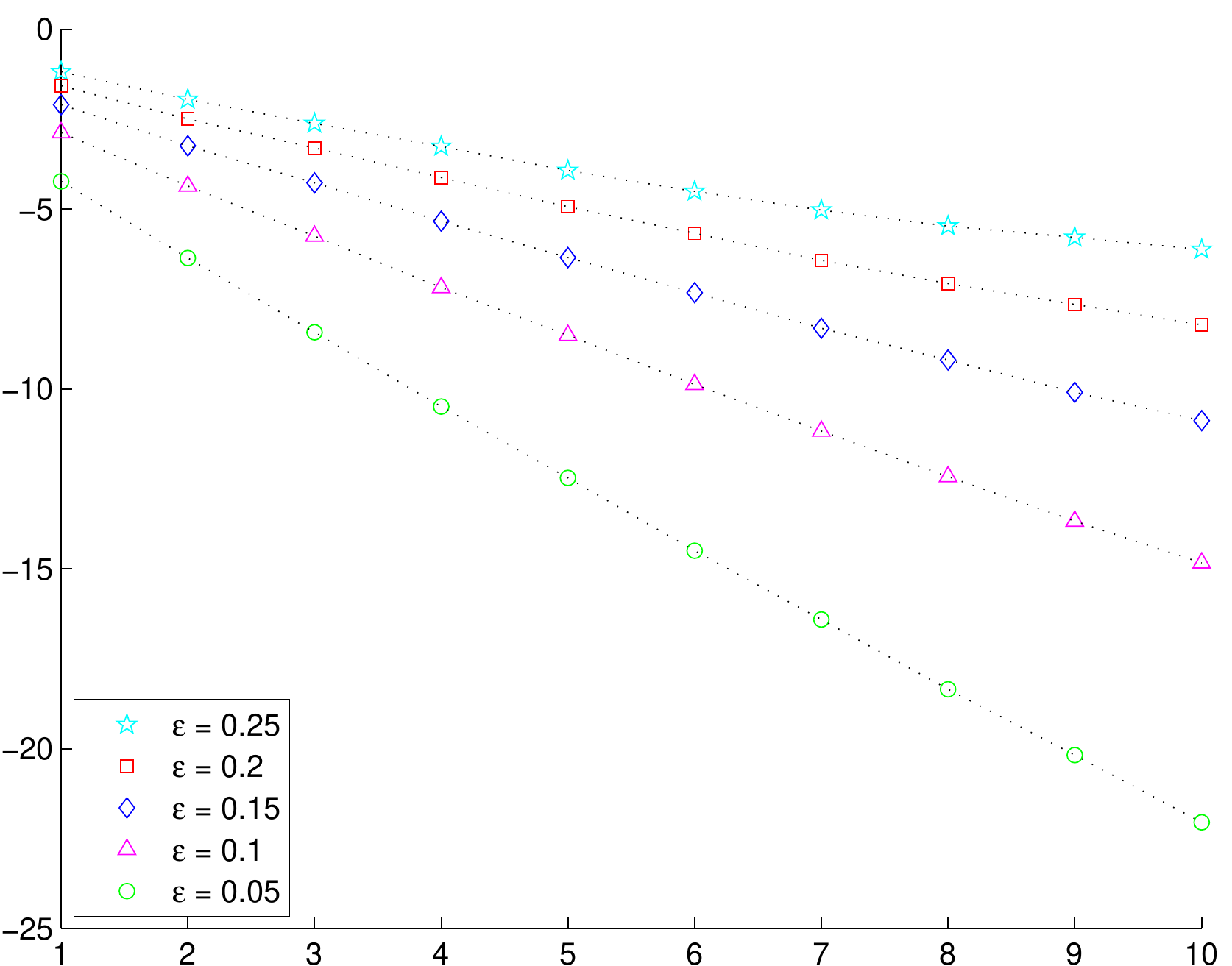}
\caption{Curves $\ln \sum_{n=1}^3 |u_{\mrm{s}}^{\eps\,j\,\infty}(\boldsymbol{\theta}_n,\boldsymbol{\theta}_{\mrm{i}})|$ (see the definition of the term $u_{\mrm{s}}^{\eps\,j\,\infty}(\boldsymbol{\theta}_n,\boldsymbol{\theta}_{\mrm{i}})$ in (\ref{DefTermeChampLointain})) with respect to the number of iterations $j$ for several values of $\eps$. According to (\ref{EcriturePtFixe}) and (\ref{EstimContra}), we know that there holds $\ln \sum_{n=1}^3 |u_{\mrm{s}}^{\eps\,j\,\infty}(\boldsymbol{\theta}_n,\boldsymbol{\theta}_{\mrm{i}})| \le j \ln\eps + C$, where $C$ is a constant independent of $\eps$. The results we obtain are in agreement with this estimate. This shows that the convergence of the fixed point procedure gets quicker as $\eps$ goes to zero. But in this case, the perturbation of the reference medium becomes smaller and smaller, as indicated by the results of Figure \ref{ValeursInclus}.\label{figResult5}}
\end{figure}

\begin{figure}[!ht]
\centering
\renewcommand{\arraystretch}{1.5}
\begin{tabular}{|c|c|c|}
\hline
$\eps$ & $\underset{\mathcal{D}}{\min}$ $\rho$ & $\underset{\mathcal{D}}{\max}$ $\rho$\\
\hline
0.25 & 0.818267 & 1.61006 \\
\hline
0.2 & 0.870935 & 1.49338 \\
\hline
0.15 & 0.915802 & 1.37484 \\
\hline
0.1 & 0.9436 & 1.25374 \\
\hline
0.05 & 0.968809 & 1.1291 \\[2pt]
\hline
\end{tabular}
\caption{ Minima and maxima of the parameter $\rho$ obtained after 10 iterations for several values of $\eps$.  \label{ValeursInclus}}
\end{figure}

\newpage
\clearpage

\section*{Acknowledgments}
The work of the first author is supported by the Academy of Finland (decision 140998) and by the FMJH through the grant ANR-10-CAMP-0151-02 in the ``Programme des Investissements d'Avenir''. The research of the third author is supported by the
Russian Foundation of Fundamental Investigations, grant No. 15-01-02175.

\bibliographystyle{plain}
\bibliography{Bibli}
\end{document}